\DeclareRobustCommand{\rvdots}{%
  \vbox{
    \baselineskip4\p@\lineskiplimit\z@
    \kern-\p@
    \hbox{.}\hbox{.}\hbox{.}
  }}
\newcommand{\Mod}[1]{\ (\textup{mod}\ #1)}
\def\moverlay{\mathpalette\mov@rlay}
\def\mov@rlay#1#2{\leavevmode\vtop{%
   \baselineskip\z@skip \lineskiplimit-\maxdimen
   \ialign{\hfil$\m@th#1##$\hfil\cr#2\crcr}}}
\newcommand{\charfusion}[3][\mathord]{
    #1{\ifx#1\mathop\vphantom{#2}\fi
        \mathpalette\mov@rlay{#2\cr#3}
      }
    \ifx#1\mathop\expandafter\displaylimits\fi}
\theoremstyle{plain} 
\newtheorem{theorem}{\indent\sc Theorem}[section]
\newtheorem{lemma}[theorem]{\indent\sc Lemma}
\newtheorem{proposition}[theorem]{\indent\sc Proposition}
\theoremstyle{definition} 
\newtheorem{remark}[theorem]{\indent\sc Remark}
\newtheorem{example}[theorem]{\indent\sc Example}
\title{Class fields generated by coordinates
of elliptic curves}
\author{
\textsc{Ho Yun Jung$^{*}$,  Ja Kyung Koo  and Dong Hwa Shin}
}
\date{} 
\begin{document}

\allowdisplaybreaks

\maketitle


\footnote{
\textbf{${}^*$Corresponding author: 
Ho Yun Jung:} Department of Mathematics, Dankook University, Cheonan-si, Chungnam 31116, Republic of Korea, E-mail: hoyunjung@dankook.ac.kr\\
\textbf{Ja Kyung Koo}, 
Department of Mathematical Sciences,
KAIST, Daejeon 34141, Republic of Korea,
E-mail: jkgoo@kaist.ac.kr\\
\textbf{Dong Hwa Shin:} Department of Mathematics, Hankuk University of Foreign Studies, Yongin-si, Gyeonggi-do 17035, Republic of Korea, E-mail: dhshin@hufs.ac.kr}


\begin{abstract}
Let $K$ be an imaginary quadratic field different from
$\mathbb{Q}(\sqrt{-1})$ and $\mathbb{Q}(\sqrt{-3})$.
For a nontrivial integral ideal $\mathfrak{m}$ of $K$,
let $K_\mathfrak{m}$ be the ray class field
modulo $\mathfrak{m}$. By using some
inequalities on special values of modular functions, we show that
a single $x$-coordinate of a certain elliptic curve
generates $K_\mathfrak{m}$ over $K$.
\end{abstract}

\noindent\textbf{Keywords:} Class field theory, complex multiplication, modular functions\\
\noindent\textbf{MSC 2020:} 11F03, 11G15, 11R37\\

\maketitle

\section {Introduction}

Let $K$ be an imaginary quadratic field with ring of integers $\mathcal{O}_K$. And, 
let $E$ be the elliptic curve with complex multiplication by $\mathcal{O}_K$
given by the Weierstrass equation
\begin{equation*}
E~:~y^2=4x^3-{{g^{}_{2}}}x-{g^{}_{3}}\quad
\textrm{with}~{g^{}_{2}}={g^{}_{2}}(\mathcal{O}_K)~
\textrm{and}~{g^{}_{3}}={g^{}_{3}}(\mathcal{O}_K).
\end{equation*}
For $z\in\mathbb{C}$, let $[z]$ denote the
coset $z+\mathcal{O}_K$ in $\mathbb{C}/\mathcal{O}_K$.
Then the map
\begin{eqnarray*}
\varphi^{}_{K}~:~\mathbb{C}/\mathcal{O}_K&\rightarrow&E(\mathbb{C})\quad(\subset\mathbb{P}^2(\mathbb{C}))\\
\mathrm{[}z]~~ & \mapsto & [\wp(z;\,\mathcal{O}_K):\wp'(z;\,\mathcal{O}_K):1],
\end{eqnarray*}
where $\wp$ is the Weierstrass $\wp$-function relative to $\mathcal{O}_K$,
is a complex analytic isomorphism of
complex Lie groups (\cite[Proposition 3.6 in Chapter VI]{Silverman}).
Corresponding to $E$, we consider the Weber function
$\mathfrak{h}^{}_{K}:E(\mathbb{C})\rightarrow\mathbb{P}^1(\mathbb{C})$
given by
\begin{equation*}
\mathfrak{h}^{}_{K}(x,\,y)=
\left\{\begin{array}{ll}
\displaystyle\frac{{g^{}_{2}}{g^{}_{3}}}{\Delta}\,x & \textrm{if}~j(E)\neq0,\,1728,\vspace{0.1cm}\\
\displaystyle\frac{g_{2}^2}{\Delta}\,x^2 & \textrm{if}~j(E)=1728,\vspace{0.1cm}\\
\displaystyle\frac{{g^{}_{3}}}{\Delta}\,x^3 & \textrm{if}~j(E)=0,
\end{array}\right.
\end{equation*}
where $\Delta=g_{2}^3-27g_{3}^2$ ($\neq0$)
and $j(E)=j(\mathcal{O}_K)$ is the $j$-invariant of $E$.
For a nontrivial ideal $\mathfrak{m}$ of $\mathcal{O}_K$, by $K_\mathfrak{m}$ we mean
the ray class field of $K$ modulo $\mathfrak{m}$.
In particular, $K_{\mathcal{O}_K}$ is the Hilbert class field $H_K$ of $K$.
Then we get by the theory of complex multiplication that
$H_K=K(j(E))$ and
\begin{equation*}
K_\mathfrak{m}=H_K\left(\mathfrak{h}^{}_{K}(x,\,y)\right)~
\textrm{for some $\mathfrak{m}$-torsion point $(x,\,y)$ on $E$}
\end{equation*}
if $\mathfrak{m}$ is proper (\cite[Chapter 10]{Lang87}).
In a letter to Hecke concerning Kronecker's Jugendtraum (= Hilbert 12th problem), Hasse asked whether
every abelian extension of $K$ can be generated only by a single
value of the Weber function $\mathfrak{h}^{}_{K}$ over $K$
(\cite[p. 91]{F-L-R}) and
Sugawara first gave partial answers to this question (\cite{Sugawara33} and \cite{Sugawara36}).
Recently, Jung, Koo and Shin (\cite{J-K-S}) proved that
if $\mathfrak{m}=N\mathcal{O}_{K}$ and $N\in\{2,\,3,\,4,\,6\}$, then
\begin{equation}\label{N2N}
K_{\mathfrak{m}}=K\left(\mathfrak{h}^{}_{K}(\varphi^{}_{K}([\tfrac{1}{N}]))\right)\quad\textrm{or}\quad
K_{\mathfrak{m}}=K\left(\mathfrak{h}^{}_{K}(\varphi^{}_{K}([\tfrac{2}{N}]))\right).
\end{equation}
And, Koo, Shin and Yoon (\cite{K-S-Y}) further showed
by utilizing the second Kronecker's limit formula that
(\ref{N2N}) holds for $N=5$ and $N\geq7$.
Besides, it is worth noting that Ramachandra (\cite{Ramachandra})
constructed a complicated primitive generator of
$K_\mathfrak{m}$ over $K$ by using
special values of the product of high powers of the discriminant $\Delta$ function and Siegel functions, which is beautiful in theory.
\par
Now, we assume that $K$ is different from $\mathbb{Q}(\sqrt{-1})$
and $\mathbb{Q}(\sqrt{-3})$, and so
${g^{}_{2}}{g^{}_{3}}\neq0$ and $j(\mathcal{O}_K)\neq0,\,1728$
(\cite[p. 200]{Cox}). Let
$\{E_{K,\,n}\}_{n\in\mathbb{Z}_{\geq0}}$ be the family of elliptic curves isomorphic to $E$
given by the affine models
\begin{equation}\label{modelE}
E_{K,\,n}~:~y^2=4x^3-\frac{J_K(J_K-1)}{27}\,C_K^{2n}x-\frac{J_K(J_K-1)^2}{27^2}\,C_K^{3n}
\end{equation}
where
\begin{equation*}
J_K=\frac{1}{1728}\,j(\mathcal{O}_K)
\quad\textrm{and}\quad C_K=J_K^2(J_K-1)^3.
\end{equation*}
Then for each $n\in\mathbb{Z}_{\geq0}$ we have a parametrization
\begin{eqnarray*}
\mathbb{C}/\mathcal{O}_K&\rightarrow&E_{K,\,n}(\mathbb{C})
\quad(\subset\mathbb{P}^2(\mathbb{C}))\\
\mathrm{[}z]~~&\mapsto&\left[{x^{}_{K,\,n}}(z):{y^{}_{K,\,n}}(z):1\right]
\end{eqnarray*}
with
\begin{equation*}
{x^{}_{K,\,n}}(z)=C_K^n\frac{{g^{}_{2}}{g^{}_{3}}}{\Delta}\wp(z;\,\mathcal{O}_K)
\quad\textrm{and}\quad
{y^{}_{K,\,n}}(z)=\sqrt{\left(C_K^n\frac{{g^{}_{2}}{g^{}_{3}}}{\Delta}\right)^3}\wp'(z;\,\mathcal{O}_K).
\end{equation*}
Here we note that
\begin{equation}\label{xchp}
{x^{}_{K,\,n}}(z)=C_K^n\mathfrak{h}^{}_{K}\left(\varphi^{}_{K}\left([z]\right)\right)
\quad(z\in\mathbb{C}).
\end{equation}
Let $\mathfrak{m}$ be a proper nontrivial ideal of $\mathcal{O}_K$
in such a way that $K_\mathfrak{m}$ properly contains $H_K$.
And, let $\omega$ be an element of $K$
so that $[\omega]=\omega+\mathcal{O}_K$
is a generator of the $\mathcal{O}_K$-module $\mathfrak{m}^{-1}\mathcal{O}_K/\mathcal{O}_K$.
In this paper, we shall prove the following three assertions
(Theorems \ref{xy}, \ref{conditional} and \ref{inertcase})\,:

\begin{enumerate}
\item[(i)] We have
$K_\mathfrak{m}=K\left({x^{}_{K,\,n}}(\omega),\,{y^{}_{K,\,n}}(\omega)^2\right)$ for every $n\in\mathbb{Z}_{\geq0}$.
\item[(ii)] We get $K_\mathfrak{m}=K\left({x^{}_{K,\,n}}(\omega)\right)$
for every $n\in\mathbb{Z}_{\geq0}$ satisfying
\begin{equation*}
n\geq\frac{\frac{13}{24}\pi\sqrt{|d_K|}+
6\ln\left(\frac{229}{76}N_\mathfrak{m}\right)}{\frac{5}{2}\pi\sqrt{|d_K|}-\ln 877383}-\frac{1}{6}
\end{equation*}
where $d_K$ is the discriminant of $K$ and $N_\mathfrak{m}$ is the least positive integer in $\mathfrak{m}$.
\item[(iii)] If $\mathfrak{m}=N\mathcal{O}_K$ for an integer $N$ ($\geq2$)
whose prime factors are all inert in $K$, then
$K_\mathfrak{m}=K\left({x^{}_{K,\,n}}(\omega)\right)$ for every $n\in\mathbb{Z}_{\geq0}$.
\end{enumerate}
To this end, we shall make use of some inequalities
on special values of
the elliptic modular function and Siegel functions
(Lemmas \ref{j-inequalityLemma} and \ref{ginequality}),
rather than using $L$-function arguments adopted in
\cite{K-S-Y}, \cite{Ramachandra} and \cite{Schertz}.
\par
Lastly, we hope to utilize the above results (i), (ii) and (iii) to investigate
the images of (higher dimensional) Galois representations
attached to elliptic curves with complex multiplication.

\section {Fricke and Siegel functions}

In this preliminary section, we shall recall
the definitions and basic properties of Fricke and Siegel functions.
\par
Let $\mathbb{H}$ be the complex upper half-plane, that is,
$\mathbb{H}=\{\tau\in\mathbb{C}~|~\mathrm{Im}(\tau)>0\}$.
Let $j$ be the elliptic modular function on $\mathbb{H}$ given by
\begin{equation*}
j(\tau)=j([\tau,\,1])\quad(\tau\in\mathbb{H}),
\end{equation*}
where $[\tau,\,1]$ stands for the lattice $\mathbb{Z}\tau+\mathbb{Z}$ in $\mathbb{C}$
and $j([\tau,\,1])$ is the $j$-invariant of an
elliptic curve isomorphic to $\mathbb{C}/[\tau,\,1]$.
And, define the function $J$ on $\mathbb{H}$ by
\begin{equation*}
J(\tau)=\frac{1}{1728}\,j(\tau)\quad(\tau\in\mathbb{H}).
\end{equation*}
Furthermore, for $\mathbf{v}=\begin{bmatrix}v_1&v_2\end{bmatrix}\in M_{1,\,2}(\mathbb{Q})\setminus M_{1,\,2}(\mathbb{Z})$
we define the Fricke function $f_\mathbf{v}$ on $\mathbb{H}$ by
\begin{equation*}
f_\mathbf{v}(\tau)=-2^73^5\,\frac{{g^{}_{2}}(\tau){g^{}_{3}}(\tau)}{\Delta(\tau)}\,
\wp(v_1\tau+v_2;\,[\tau,\,1])
\quad(\tau\in\mathbb{H}),
\end{equation*}
where ${g^{}_{2}}(\tau)={g^{}_{2}}([\tau,\,1])$,
${g^{}_{3}}(\tau)={g^{}_{3}}([\tau,\,1])$ and
$\Delta(\tau)=\Delta([\tau,\,1])$.
Note that for $\mathbf{u},\,\mathbf{v}\in M_{1,\,2}(\mathbb{Q})
\setminus M_{1,\,2}(\mathbb{Z})$
\begin{equation}\label{fequal}
f_\mathbf{u}=f_\mathbf{v}\quad
\Longleftrightarrow\quad
\mathbf{u}\equiv\mathbf{v}~\textrm{or}~
-\mathbf{v}\Mod{M_{1,\,2}(\mathbb{Z})}
\end{equation}
(\cite[Lemma 10.4]{Cox}).
For a positive integer $N$, let $\mathcal{F}_N$
be the field given by
\begin{equation*}
\mathcal{F}_N=\left\{\begin{array}{ll}
\mathbb{Q}(j) & \textrm{if}~N=1,\\
\mathcal{F}_1\left(f_\mathbf{v}~|~\mathbf{v}\in
\frac{1}{N}M_{1,\,2}(\mathbb{Z})\setminus M_{1,\,2}(\mathbb{Z})\right)& \textrm{if}~N\geq2.
\end{array}\right.
\end{equation*}
Then, $\mathcal{F}_N$ is a Galois extension of $\mathcal{F}_1$ whose Galois group is isomorphic to
$\mathrm{GL}_2(\mathbb{Z}/N\mathbb{Z})/\langle -I_2\rangle$
(\cite[Theorem 6.6]{Shimura}).
And, it coincides with
the field of meromorphic modular functions of level $N$
whose Fourier coefficients
belong to the $N$th cyclotomic field (\cite[Proposition 6.9]{Shimura}).

\begin{proposition}\label{Fricke}
If $N\geq2$, $\mathbf{v}\in\frac{1}{N}M_{1,\,2}(\mathbb{Z})\setminus M_{1,\,2}
(\mathbb{Z})$ and $\gamma\in\mathrm{GL}_2(\mathbb{Z}/N\mathbb{Z})/\langle-I_2\rangle$, then
\begin{equation*}
f_\mathbf{v}^\gamma~=~f_{\mathbf{v}\gamma}.
\end{equation*}
Moreover, if $\gamma\in\mathrm{SL}_2(\mathbb{Z}/N\mathbb{Z})/\langle-I_2\rangle$, then
\begin{equation*}
f_\mathbf{v}^\gamma~=~f_\mathbf{v}\circ\alpha,
\end{equation*}
where $\alpha$ is any element of $\mathrm{SL}_2(\mathbb{Z})$
\textup{(}acting on $\mathbb{H}$ as fractional linear transformation\textup{)}
whose image in $\mathrm{SL}_2(\mathbb{Z}/N\mathbb{Z})/\langle-I_2\rangle$
is $\gamma$.
\end{proposition}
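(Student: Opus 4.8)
The plan is to reduce to two families of elements that together generate $\mathrm{GL}_2(\mathbb{Z}/N\mathbb{Z})/\langle-I_2\rangle$, namely the image of $\mathrm{SL}_2(\mathbb{Z}/N\mathbb{Z})$ and the diagonal matrices $\begin{bmatrix}1&0\\0&d\end{bmatrix}$ with $d\in(\mathbb{Z}/N\mathbb{Z})^\times$, and then to glue the two computations using the group-action structure of $\mathrm{Gal}(\mathcal{F}_N/\mathcal{F}_1)$. Throughout I would lean on the description of this Galois action recorded just before the statement (Shimura, Theorem 6.6): the subgroup $\mathrm{SL}_2(\mathbb{Z}/N\mathbb{Z})/\langle-I_2\rangle$ acts by $f\mapsto f\circ\alpha$ for a lift $\alpha\in\mathrm{SL}_2(\mathbb{Z})$, while the determinant quotient, identified with $(\mathbb{Z}/N\mathbb{Z})^\times$, acts on the $\mathbb{Q}(\zeta_N)$-Fourier coefficients through $\zeta_N\mapsto\zeta_N^d$ (here $\zeta_N=e^{2\pi i/N}$).

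First I would settle the ``moreover'' part, which is purely analytic. Writing $\alpha=\begin{bmatrix}a&b\\c&d\end{bmatrix}\in\mathrm{SL}_2(\mathbb{Z})$ and $\lambda=(c\tau+d)^{-1}$, one checks the change-of-basis identity $[\alpha\tau,1]=\lambda[\tau,1]$ and then uses the homogeneities $g_2(\lambda L)=\lambda^{-4}g_2(L)$, $g_3(\lambda L)=\lambda^{-6}g_3(L)$, $\Delta(\lambda L)=\lambda^{-12}\Delta(L)$ and $\wp(\lambda z;\lambda L)=\lambda^{-2}\wp(z;L)$. A short computation then shows that $g_2g_3/\Delta\cdot\wp$ is invariant under the simultaneous homothety $(z,L)\mapsto(\lambda z,\lambda L)$ (so the rational prefactor $-2^73^5$ is harmless), and that the argument transforms as $v_1(\alpha\tau)+v_2=\lambda\big((\mathbf{v}\alpha)_1\tau+(\mathbf{v}\alpha)_2\big)$; thus every power of $\lambda$ cancels and $f_\mathbf{v}\circ\alpha=f_{\mathbf{v}\alpha}$ as meromorphic functions on $\mathbb{H}$. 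Feeding this identity into Shimura's description of the $\mathrm{SL}_2$-action yields simultaneously $f_\mathbf{v}^\gamma=f_\mathbf{v}\circ\alpha$ and $f_\mathbf{v}^\gamma=f_{\mathbf{v}\gamma}$ for every $\gamma$ in the image of $\mathrm{SL}_2(\mathbb{Z}/N\mathbb{Z})$.

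Next I would treat a diagonal element $\gamma=\begin{bmatrix}1&0\\0&d\end{bmatrix}$ by comparing $q$-expansions. Substituting $z=v_1\tau+v_2$ into the standard Fourier expansion of $(2\pi i)^{-2}\wp(z;[\tau,1])$ turns each $e^{2\pi i z}$ into $q^{v_1}e^{2\pi i v_2}$, so that $f_\mathbf{v}$ has an expansion in powers of $q^{1/N}$ whose exponents depend only on $v_1$ and whose coefficients lie in $\mathbb{Q}(\zeta_N)$, built from the root of unity $e^{2\pi i v_2}=\zeta_N^{Nv_2}$. Applying $\zeta_N\mapsto\zeta_N^d$ therefore leaves the $q$-exponents untouched and replaces every $e^{2\pi i v_2}$ by $e^{2\pi i dv_2}$, i.e. it replaces $\mathbf{v}=[v_1,v_2]$ by $[v_1,dv_2]=\mathbf{v}\gamma$; hence $f_\mathbf{v}^\gamma=f_{\mathbf{v}\gamma}$ for such $\gamma$.

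Finally I would assemble the general case: writing an arbitrary $\gamma\in\mathrm{GL}_2(\mathbb{Z}/N\mathbb{Z})$ as $\gamma=\alpha_0\begin{bmatrix}1&0\\0&\det\gamma\end{bmatrix}$ with $\alpha_0\in\mathrm{SL}_2(\mathbb{Z}/N\mathbb{Z})$, and using that $\sigma\mapsto\big(f_\mathbf{v}\mapsto f_{\mathbf{v}\sigma}\big)$ respects the composition law of the Galois group (right multiplication of matrices is associative, and $f_{-\mathbf{w}}=f_\mathbf{w}$ by (\ref{fequal}) makes the action well defined modulo $\langle-I_2\rangle$), the two special cases combine to give $f_\mathbf{v}^\gamma=f_{\mathbf{v}\gamma}$ in general. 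I expect the main obstacle to be the bookkeeping in the $q$-expansion step: one must pin down exactly which roots of unity occur and verify that the cyclotomic automorphism $\zeta_N\mapsto\zeta_N^d$ matches the substitution $v_2\mapsto dv_2$, rather than $v_1\mapsto dv_1$ or a spurious sign coming from the $\langle-I_2\rangle$ quotient; the $\mathrm{SL}_2$-part, by contrast, is a routine homogeneity calculation.
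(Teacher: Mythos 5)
Correct: the paper's own ``proof'' of Proposition \ref{Fricke} is nothing but a citation to Lang (Theorem 3 in Chapter 6) and Shimura (Theorem 6.6), and your sketch reconstructs precisely the standard argument behind those citations --- the homogeneity computation $g_2(\lambda L)=\lambda^{-4}g_2(L)$, $g_3(\lambda L)=\lambda^{-6}g_3(L)$, $\Delta(\lambda L)=\lambda^{-12}\Delta(L)$, $\wp(\lambda z;\lambda L)=\lambda^{-2}\wp(z;L)$ giving $f_\mathbf{v}\circ\alpha=f_{\mathbf{v}\alpha}$ on the $\mathrm{SL}_2$ part, the cyclotomic action $\zeta_N\mapsto\zeta_N^d$ on the coefficients of the $q^{1/N}$-expansion giving the diagonal part, and the factorization $\gamma=\alpha_0\begin{bmatrix}1&0\\0&\det\gamma\end{bmatrix}$ to glue them, with well-definedness modulo $\langle -I_2\rangle$ coming from $f_{-\mathbf{v}}=f_\mathbf{v}$. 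So your proposal is correct and takes essentially the same route as the source the paper defers to.
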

\begin{proof}
See \cite[Theorem 3 in Chapter 6]{Lang87} or \cite[Theorem 6.6]{Shimura}.
\end{proof}

For
$\mathbf{v}=\begin{bmatrix}v_1&v_2\end{bmatrix}\in M_{1,\,2}(\mathbb{Q})
\setminus M_{1,\,2}(\mathbb{Z})$, the Siegel function
$g_\mathbf{v}$ on $\mathbb{H}$
is given by the infinite product expansion
\begin{equation}\label{productexpansion}
g_\mathbf{v}(\tau)
=-e^{\pi\mathrm{i}v_2(v_1-1)}
q_\tau^{\frac{1}{2}(v_1^2-v_1+\frac{1}{6})}
(1-q_z)
\prod_{n=1}^\infty(1-q_\tau^nq_z)
(1-q_\tau^nq_z^{-1})\quad(\tau\in\mathbb{H}),
\end{equation}
where $q_\tau=e^{2\pi\mathrm{i}\tau}$ and
$q_z=e^{2\pi\mathrm{i}z}$ with $z=v_1\tau+v_2$.
Observe that $g_\mathbf{v}$ has neither a zero nor a pole on $\mathbb{H}$.

\begin{proposition}\label{Siegelfunction}
Let $N$ be an integer such that $N\geq2$,
and let $\mathbf{v}\in\frac{1}{N}M_{1,\,2}(\mathbb{Z})
\setminus M_{1,\,2}(\mathbb{Z})$.
\begin{enumerate}
\item[\textup{(i)}] If $\mathbf{u}
\in\frac{1}{N}M_{1,\,2}(\mathbb{Z})\setminus M_{1,\,2}(\mathbb{Z})$
such that $\mathbf{u}\equiv\mathbf{v}
~\textrm{or}~-\mathbf{v}\Mod{M_{1,\,2}(\mathbb{Z})}$,
then $g_\mathbf{u}^{12N}=g_\mathbf{v}^{12N}$.
\item[\textup{(ii)}] The function $g_\mathbf{v}^{12N}$ belongs to $\mathcal{F}_N$ and satisfies
\begin{equation*}
\left(g_\mathbf{v}^{12N}\right)^\gamma
~=~g_{\mathbf{v}\gamma}^{12N}
\quad(\gamma\in\mathrm{GL}_2(\mathbb{Z}/N\mathbb{Z})/\langle-I_2\rangle
\simeq\mathrm{Gal}(\mathcal{F}_N/\mathcal{F}_1)).
\end{equation*}
\end{enumerate}
\end{proposition}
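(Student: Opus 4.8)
The plan is to derive everything from the infinite product expansion (\ref{productexpansion}), supplemented by the transformation behaviour of $g_\mathbf{v}$ under $\mathrm{SL}_2(\mathbb{Z})$.

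For part (i), since the relation $\mathbf{u}\equiv\pm\mathbf{v}\Mod{M_{1,\,2}(\mathbb{Z})}$ is generated by the negation $\mathbf{v}\mapsto-\mathbf{v}$ together with the two elementary integral translations $\mathbf{v}\mapsto\mathbf{v}+\begin{bmatrix}1&0\end{bmatrix}$ and $\mathbf{v}\mapsto\mathbf{v}+\begin{bmatrix}0&1\end{bmatrix}$, it suffices to compute the corresponding ratios of Siegel functions and to check that each is a root of unity whose $12N$-th power equals $1$. First I would substitute $-\mathbf{v}$ into (\ref{productexpansion}); as $q_z$ is replaced by $q_z^{-1}$ while the product over $n$ is symmetric under $q_z\leftrightarrow q_z^{-1}$, a short manipulation using $1-q_z^{-1}=-q_z^{-1}(1-q_z)$ collapses the prefactors to yield $g_{-\mathbf{v}}=-g_\mathbf{v}$, so $g_{-\mathbf{v}}^{12N}=g_\mathbf{v}^{12N}$. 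For the translation by $\begin{bmatrix}0&1\end{bmatrix}$ only the exponential prefactor changes, giving $g_{\mathbf{v}+[0\,1]}=-e^{\pi\mathrm{i}v_1}g_\mathbf{v}$; for the translation by $\begin{bmatrix}1&0\end{bmatrix}$ one reindexes the two products (shifting $n\mapsto n\pm1$) and uses $q_\tau^{v_1}q_z^{-1}=e^{-2\pi\mathrm{i}v_2}$ to obtain $g_{\mathbf{v}+[1\,0]}=-e^{-\pi\mathrm{i}v_2}g_\mathbf{v}$. Because $\mathbf{v}\in\frac1N M_{1,\,2}(\mathbb{Z})$, the quantities $-e^{\pi\mathrm{i}v_1}$ and $-e^{-\pi\mathrm{i}v_2}$ are $2N$-th roots of unity, so each elementary ratio has order dividing $2N$, a divisor of $12N$. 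Composing these factors for a general integral translation and raising to the $12N$-th power then gives $g_\mathbf{u}^{12N}=g_\mathbf{v}^{12N}$.

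For part (ii), I would first record the transformation law $g_\mathbf{v}\circ\alpha=\varepsilon(\alpha)\,g_{\mathbf{v}\alpha}$ for $\alpha=\begin{bmatrix}a&b\\c&d\end{bmatrix}\in\mathrm{SL}_2(\mathbb{Z})$, where $\varepsilon(\alpha)$ is a $12$-th root of unity; this follows by writing $g_\mathbf{v}$ as the product of a Klein form of weight $-1$ and $\eta^2$, whose multiplier systems combine to a $12$-th root of unity (alternatively one cites the Siegel-function transformation formula). Raising to the $12N$-th power annihilates $\varepsilon(\alpha)$, so $(g_\mathbf{v}^{12N})\circ\alpha=g_{\mathbf{v}\alpha}^{12N}$. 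In particular, if $\alpha\in\Gamma(N)$ then $\mathbf{v}\alpha\equiv\mathbf{v}\Mod{M_{1,\,2}(\mathbb{Z})}$, so part (i) yields $(g_\mathbf{v}^{12N})\circ\alpha=g_\mathbf{v}^{12N}$; thus $g_\mathbf{v}^{12N}$ is invariant under $\Gamma(N)$. Since $g_\mathbf{v}$ has neither a zero nor a pole on $\mathbb{H}$, the function $g_\mathbf{v}^{12N}$ is holomorphic and nonvanishing there. Writing $v_1=a/N$ and $v_2=b/N$, the leading term of $g_\mathbf{v}^{12N}$ carries $q_\tau$ to the exponent $6N(v_1^2-v_1+\frac16)$, which becomes $6a^2-6aN+N^2\in\mathbb{Z}$ after expressing it in $q_\tau^{1/N}$, so $g_\mathbf{v}^{12N}$ is meromorphic at $\infty$, and meromorphic at every cusp by applying the law above. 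Moreover the prefactor raised to $12N$ equals $\zeta_N^{6ab}$ and $q_z=\zeta_N^{b}q_\tau^{a/N}$, so all Fourier coefficients lie in $\mathbb{Q}(\zeta_N)$; hence $g_\mathbf{v}^{12N}\in\mathcal{F}_N$ by the characterization of $\mathcal{F}_N$ recalled above.

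It remains to identify the Galois action. Under $\mathrm{Gal}(\mathcal{F}_N/\mathcal{F}_1)\simeq\mathrm{GL}_2(\mathbb{Z}/N\mathbb{Z})/\langle-I_2\rangle$, the subgroup $\mathrm{SL}_2(\mathbb{Z}/N\mathbb{Z})/\langle-I_2\rangle$ acts through $\tau\mapsto\alpha\tau$ as in Proposition \ref{Fricke}, so the displayed $\mathrm{SL}_2$-formula gives $(g_\mathbf{v}^{12N})^\gamma=g_{\mathbf{v}\gamma}^{12N}$ for such $\gamma$. A complementary diagonal element $\gamma=\begin{bmatrix}1&0\\0&d\end{bmatrix}$ with $\gcd(d,N)=1$ acts on $\mathcal{F}_N$ through the cyclotomic action $\zeta_N\mapsto\zeta_N^d$ on Fourier coefficients; applying this to the $q$-expansion computed above sends $\zeta_N^{b}\mapsto\zeta_N^{bd}$, i.e.\ replaces $v_2$ by $dv_2$, which is exactly $g_{\mathbf{v}\gamma}^{12N}$ since $\mathbf{v}\gamma=\begin{bmatrix}v_1&dv_2\end{bmatrix}$. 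Because these two families generate $\mathrm{GL}_2(\mathbb{Z}/N\mathbb{Z})/\langle-I_2\rangle$, and the generator formula has been verified for \emph{every} admissible $\mathbf{v}$, an induction on the length of a word in the generators (using $\mathbf{v}(\gamma_1\gamma_2)=(\mathbf{v}\gamma_1)\gamma_2$) extends $(g_\mathbf{v}^{12N})^\gamma=g_{\mathbf{v}\gamma}^{12N}$ to all $\gamma$. I expect the main obstacle to be the careful bookkeeping of the several root-of-unity factors—both the $12$-th root in the $\mathrm{SL}_2$-multiplier and the $2N$-th roots from translations—together with checking that the cyclotomic (determinant) action on the $q$-expansion matches the substitution $\mathbf{v}\mapsto\mathbf{v}\gamma$.
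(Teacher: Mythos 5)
Your proof is correct, but it takes a genuinely different route from the paper, whose own proof of this proposition consists entirely of citations to Kubert--Lang: \cite[Chapter 2, Theorem 1.1 and p.~29]{K-L} for (i) and \cite[Chapter 2, Theorem 1.2 and Proposition 1.3]{K-L} for (ii). You instead reconstruct the content of those cited results. Your multipliers $g_{-\mathbf{v}}=-g_\mathbf{v}$, $g_{\mathbf{v}+\begin{bmatrix}0&1\end{bmatrix}}=-e^{\pi\mathrm{i}v_1}g_\mathbf{v}$ and $g_{\mathbf{v}+\begin{bmatrix}1&0\end{bmatrix}}=-e^{-\pi\mathrm{i}v_2}g_\mathbf{v}$ are the correct distribution relations; they, and the multipliers of composed translations (all intermediate vectors stay in $\frac{1}{N}M_{1,\,2}(\mathbb{Z})$), are $2N$-th roots of unity, hence are killed by the exponent $12N$, which settles (i). Your plan for (ii) --- the $\mathrm{SL}_2(\mathbb{Z})$ law $g_\mathbf{v}\circ\alpha=\varepsilon(\alpha)\,g_{\mathbf{v}\alpha}$ with $\varepsilon(\alpha)^{12}=1$ coming from the Klein form/$\eta^2$ factorization, $\Gamma(N)$-invariance via (i), the $q_\tau^{1/N}$-expansion with coefficients in $\mathbb{Z}[\zeta_N]$, then checking the Galois formula separately on $\mathrm{SL}_2(\mathbb{Z}/N\mathbb{Z})/\langle-I_2\rangle$ and on the diagonal matrices, which together generate $\mathrm{GL}_2(\mathbb{Z}/N\mathbb{Z})/\langle-I_2\rangle$ --- is in substance the argument of the cited reference, and the final composition step is legitimate precisely because you establish the generator formula for \emph{every} admissible $\mathbf{v}$ simultaneously, so that $(g_\mathbf{v}^{12N})^{\gamma_1\gamma_2}=(g_{\mathbf{v}\gamma_1}^{12N})^{\gamma_2}=g_{\mathbf{v}\gamma_1\gamma_2}^{12N}$ makes sense. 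Two dependencies keep your argument from being fully self-contained: the fact that the $\eta^2$-multiplier is a $12$-th root of unity, and Shimura's description (\cite[Theorem 6.6]{Shimura}) of the Galois action --- $\mathrm{SL}_2$ acting by fractional linear transformations on all of $\mathcal{F}_N$ (not only on the Fricke functions literally covered by Proposition \ref{Fricke}) and diagonal matrices acting cyclotomically on Fourier coefficients. What your route buys is transparency about exactly why the exponent $12N$ suffices and why the coefficients land in $\mathbb{Q}(\zeta_N)$; what the paper's citation buys is brevity.
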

\begin{proof}
\begin{enumerate}
\item[(i)] See \cite[Theorem 1.1 in Chapter 2 and p. 29]{K-L}.
\item[(ii)] See \cite[Theorem 1.2 and Proposition 1.3 in Chapter 2]{K-L}.
\end{enumerate}
\end{proof}

\begin{lemma}\label{ffgg}
Let $\mathbf{u},\,\mathbf{v}\in M_{1,\,2}(\mathbb{Q})
\setminus M_{1,\,2}(\mathbb{Z})$ such that
$\mathbf{u}\not\equiv\mathbf{v}~\textrm{and}~
-\mathbf{v}\Mod{M_{1,\,2}(\mathbb{Z})}$.
Then we have
\begin{equation*}
(f_\mathbf{u}-f_\mathbf{v})^6=
\frac{J^2(J-1)^3}{3^9}
\frac{g_{\mathbf{u}+\mathbf{v}}^6g_{\mathbf{u}-\mathbf{v}}^6}
{g_\mathbf{u}^{12}g_\mathbf{v}^{12}}.
\end{equation*}
\end{lemma}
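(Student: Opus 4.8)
The plan is to reduce the identity to the classical factorization of a difference of Weierstrass $\wp$-values through the $\sigma$-function, and then to repackage everything in terms of Klein forms and Siegel functions. First I would write, abbreviating $z_\mathbf{u}=u_1\tau+u_2$, $z_\mathbf{v}=v_1\tau+v_2$ and $L=[\tau,1]$,
\begin{equation*}
f_\mathbf{u}-f_\mathbf{v}=-2^73^5\,\frac{{g^{}_{2}}{g^{}_{3}}}{\Delta}\bigl(\wp(z_\mathbf{u};\,L)-\wp(z_\mathbf{v};\,L)\bigr),
\end{equation*}
and invoke the standard identity $\wp(a;\,L)-\wp(b;\,L)=-\sigma(a+b;\,L)\sigma(a-b;\,L)/\bigl(\sigma(a;\,L)^2\sigma(b;\,L)^2\bigr)$. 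The hypothesis $\mathbf{u}\not\equiv\mathbf{v},-\mathbf{v}\Mod{M_{1,\,2}(\mathbb{Z})}$ guarantees both that $z_\mathbf{u}\not\equiv\pm z_\mathbf{v}\Mod{L}$, so the left-hand side is a genuine nonzero function, and that $\mathbf{u}\pm\mathbf{v}\notin M_{1,\,2}(\mathbb{Z})$, so the Siegel functions $g_{\mathbf{u}\pm\mathbf{v}}$ on the right are defined.

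Next I would pass from $\sigma$ to the Klein form $\mathfrak{k}(z;\,L)=\kappa\,e^{-\eta(z)z/2}\sigma(z;\,L)$, where $\eta(\cdot)=\eta(\cdot\,;\,L)$ is the ($\mathbb{R}$-linear) quasi-period homomorphism and $\kappa$ is the normalizing constant built into the Siegel function. The key simplification is that, because $\eta$ is additive, the exponential factors cancel in the relevant ratio: setting $a=z_\mathbf{u}$, $b=z_\mathbf{v}$ one checks
\begin{equation*}
\eta(a+b)(a+b)+\eta(a-b)(a-b)-2\eta(a)a-2\eta(b)b=0,
\end{equation*}
so that $\sigma(a+b)\sigma(a-b)/(\sigma(a)^2\sigma(b)^2)=\kappa^{-2}\,\mathfrak{k}(a+b)\mathfrak{k}(a-b)/(\mathfrak{k}(a)^2\mathfrak{k}(b)^2)$. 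Using the relation $g_\mathbf{w}=\mathfrak{k}_\mathbf{w}\,\eta(\tau)^2$ between Siegel functions, Klein forms and the Dedekind $\eta$-function, the four factors $\eta(\tau)^2$ combine to a net $\eta(\tau)^4$, giving
\begin{equation*}
f_\mathbf{u}-f_\mathbf{v}=(\text{const})\cdot\frac{{g^{}_{2}}{g^{}_{3}}}{\Delta}\,\eta(\tau)^4\,\frac{g_{\mathbf{u}+\mathbf{v}}\,g_{\mathbf{u}-\mathbf{v}}}{g_\mathbf{u}^{2}\,g_\mathbf{v}^{2}}.
\end{equation*}

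I would then raise to the sixth power and convert the weight-carrying coefficient into the shape demanded by the statement. The sixth power produces $\eta(\tau)^{24}$, which I replace by $\Delta/(2\pi)^{12}$; together with $J={g^{}_{2}}^3/\Delta$ and $J-1=27{g^{}_{3}}^2/\Delta$ one gets
\begin{equation*}
\frac{{g^{}_{2}}^6{g^{}_{3}}^6}{\Delta^6}\,\eta(\tau)^{24}=\frac{{g^{}_{2}}^6{g^{}_{3}}^6}{(2\pi)^{12}\Delta^5}=\frac{1}{(2\pi)^{12}}\cdot\frac{J^2(J-1)^3}{3^9},
\end{equation*}
so that the entire modular part collapses to exactly the factor $J^2(J-1)^3 g_{\mathbf{u}+\mathbf{v}}^6 g_{\mathbf{u}-\mathbf{v}}^6/(3^9 g_\mathbf{u}^{12} g_\mathbf{v}^{12})$ appearing in the lemma.

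The main obstacle is bookkeeping the pure numerical constant: one must check that the accumulated factor coming from the $-2^73^5$ in the Fricke normalization, the constant $\kappa$ in the Klein form, and the $(2\pi)^{12}$ from $\Delta=(2\pi)^{12}\eta^{24}$ --- all raised to the sixth power --- multiply to exactly $1$. This is precisely what the normalizations of Kubert--Lang are arranged to achieve, and taking the sixth power is what removes the residual sign and root-of-unity ambiguities that would otherwise obstruct a clean equality. If tracing these constants directly proves delicate, an alternative is to observe that both sides are level-$N$ modular functions (for $N$ a common denominator of the entries of $\mathbf{u},\mathbf{v}$) with the same divisor --- the zeros and poles of the Siegel functions and of the $J$, $J-1$ factors matching those of $(f_\mathbf{u}-f_\mathbf{v})^6$ --- hence equal up to a constant, which is then pinned down by comparing the leading coefficients of the $q$-expansions as $\tau\to\mathrm{i}\infty$.
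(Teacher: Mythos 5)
Your chain of reductions --- the $\sigma$-quotient identity for $\wp(z_\mathbf{u})-\wp(z_\mathbf{v})$, the passage to Klein forms via the parallelogram cancellation of the quasi-period exponentials (your identity $\eta(a+b)(a+b)+\eta(a-b)(a-b)=2\eta(a)a+2\eta(b)b$ is correct), the conversion to Siegel functions, and the final weight bookkeeping with $\Delta=(2\pi)^{12}\eta^{24}$, $J=g_2^3/\Delta$, $J-1=27g_3^2/\Delta$ --- is exactly the standard derivation that lies behind the paper's one-line citation of Kubert--Lang, and those structural steps are sound, as is your use of the hypothesis $\mathbf{u}\not\equiv\pm\mathbf{v}\Mod{M_{1,\,2}(\mathbb{Z})}$ to ensure both sides are defined and not identically zero.

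The gap is at the single step you deferred, and it is fatal for the statement as written: the accumulated numerical constant is not $1$. The $2\pi$'s and Klein-form constants do cancel --- writing $g_\mathbf{w}=\mathfrak{k}_\mathbf{w}\Delta^{1/12}$, the quotient $\mathfrak{k}_{\mathbf{u}+\mathbf{v}}\mathfrak{k}_{\mathbf{u}-\mathbf{v}}/(\mathfrak{k}_\mathbf{u}^2\mathfrak{k}_\mathbf{v}^2)$ picks up exactly $\Delta^{1/6}$, whose sixth power cancels one $\Delta$ and turns $(g_2g_3/\Delta)^6$ into $g_2^6g_3^6/\Delta^5=J^2(J-1)^3/3^9$ --- but the factor $(2^73^5)^6=2^{42}3^{30}$ coming from the paper's Fricke normalization $f_\mathbf{v}=-2^73^5\frac{g_2g_3}{\Delta}\wp(v_1\tau+v_2;[\tau,1])$ cancels against nothing. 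Carried out honestly, your outline proves
\begin{equation*}
(f_\mathbf{u}-f_\mathbf{v})^6=2^{42}3^{30}\cdot\frac{J^2(J-1)^3}{3^9}\,
\frac{g_{\mathbf{u}+\mathbf{v}}^6 g_{\mathbf{u}-\mathbf{v}}^6}{g_\mathbf{u}^{12}g_\mathbf{v}^{12}},
\end{equation*}
i.e.\ the displayed identity holds for the unnormalized Weber function $\frac{g_2g_3}{\Delta}\wp$ (evidently the normalization of the cited source), not for the paper's $f_\mathbf{v}$. Your own fallback test exposes this instantly: take $\mathbf{u}=\begin{bmatrix}0&\frac{1}{2}\end{bmatrix}$, $\mathbf{v}=\begin{bmatrix}0&\frac{1}{4}\end{bmatrix}$ and compare leading Fourier coefficients as $q_\tau\to0$. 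The $q$-expansion of $\wp$ gives $f_\mathbf{u}-f_\mathbf{v}\sim 3\,q_\tau^{-1}$, so the left side is $\sim 3^6q_\tau^{-6}$, while $J^2(J-1)^3\sim 1728^{-5}q_\tau^{-5}$ and $g_{\mathbf{u}+\mathbf{v}}^6g_{\mathbf{u}-\mathbf{v}}^6/(g_\mathbf{u}^{12}g_\mathbf{v}^{12})\sim 2^{-12}q_\tau^{-1}$ make the stated right side $\sim(3^9\cdot1728^5\cdot2^{12})^{-1}q_\tau^{-6}$; the ratio is exactly $2^{42}3^{30}$. So the assertion that the Kubert--Lang normalizations ``are arranged'' to make the constant $1$ is precisely where the argument breaks. (The mismatch is in fact a normalization inconsistency of the paper itself, harmless downstream: Lemma \ref{ffgg} is used only inside absolute-value ratios in the proof of Theorem \ref{conditional} and up to a nonzero rational factor in Lemma \ref{normconstant}, where $2^{42}3^{30}$ drops out. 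But a correct proof must either restate the lemma with this extra constant or drop the factor $-2^73^5$ from the definition of $f_\mathbf{v}$; it cannot establish the lemma literally as stated.)
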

\begin{proof}
See \cite[p. 51]{K-L}.
\end{proof}

\section {Extended form class groups}

In this section, we shall review some necessary consequences
of the theory of complex multiplication, and
introduce extended form class groups
which might be an extension of Gauss' form class group.
\par
Let $K$ be an imaginary quadratic field of discriminant $d_K$.
For a positive integer $N$, let $\mathcal{Q}_N(d_K)$ be the set of
primitive positive definite binary quadratic forms of discriminant $d_K$
whose leading coefficients
are relatively prime to $N$, that is,
\begin{equation*}
\mathcal{Q}_N(d_K)=
\left\{Q\left(\begin{bmatrix}x\\y\end{bmatrix}\right)=a_Qx^2+b_Qxy+c_Qy^2\in\mathbb{Z}[x,\,y]~\left|~
\begin{array}{l}
\gcd(a_Q,\,b_Q,\,c_Q)=1,\\
\gcd(a_Q,\,N)=1,\\
a_Q>0,\\
b_Q^2-4a_Qc_Q=d_K\\
\end{array}\right.
\right\}.
\end{equation*}
The congruence subgroup
\begin{equation*}
\Gamma_1(N)=\left\{
\gamma\in\mathrm{SL}_2(\mathbb{Z})~|~
\gamma\equiv\begin{bmatrix}1&\mathrm{*}\\
0&1\end{bmatrix}\Mod{N M_2(\mathbb{Z})}\right\}
\end{equation*}
defines an equivalence relation $\sim_N$ on the set $\mathcal{Q}_N(d_K)$ as
\begin{equation*}
Q\sim_N Q'\quad
\Longleftrightarrow\quad
Q'=Q\left(\gamma\begin{bmatrix}x\\y\end{bmatrix}\right)~
\textrm{for some}~\gamma\in\Gamma_1(N).
\end{equation*}
Let
\begin{equation*}
\mathrm{C}_N(d_K)=\mathcal{Q}_N(d_K)/\sim_N
\end{equation*}
be the set of equivalence classes.
For each $Q=a_Qx^2+b_Qxy+c_Qy^2\in\mathcal{Q}_N(d_K)$,
let $[Q]_N$ be its class in $\mathrm{C}_N(d_K)$,
and let
\begin{equation*}
{\tau^{}_{Q}}=\frac{-b_Q+\sqrt{d_K}}{2a_Q}
\end{equation*}
which is the zero of the quadratic polynomial $Q(x,\,1)$
lying in $\mathbb{H}$.
For a nontrivial ideal $\mathfrak{m}$ of $\mathcal{O}_K$,
let us denote by $\mathrm{Cl}(\mathfrak{m})$
the ray class group modulo $\mathfrak{m}$,
namely, $\mathrm{Cl}(\mathfrak{m})=I_K(\mathfrak{m})/P_{K,\,1}(\mathfrak{m})$
where $I_K(\mathfrak{m})$ is the group
of fractional ideals of $K$ relatively prime to $\mathfrak{m}$
and $P_{K,\,1}(\mathfrak{m})$
is the subgroup of $P_K(\mathfrak{m})$ (the subgroup of
$I_K(\mathfrak{m})$ consisting of principal fractional ideals) defined by
\begin{equation*}
P_{K,\,1}(\mathfrak{m})=\langle\nu\mathcal{O}_K~|~
\nu\in\mathcal{O}_K\setminus\{0\}~\textrm{such that}~
\nu\equiv1\Mod{\mathfrak{m}}
\rangle.
\end{equation*}
Then, when $\mathfrak{m}=N\mathcal{O}_{K}$, the map
\begin{eqnarray*}
\mathrm{C}_N(d_K)&\rightarrow&\mathrm{Cl}(N\mathcal{O}_K)\\
\mathrm{[}Q]_N & \mapsto& [[{\tau^{}_{Q}},\,1]]=[\mathbb{Z}\tau^{}_{Q}+\mathbb{Z}]
\end{eqnarray*}
is a well-defined bijection, through which we may regard
$\mathrm{C}_N(d_K)$ 
as a group isomorphic to $\mathrm{Cl}(N\mathcal{O}_K)$
(\cite[Theorem 2.9]{E-K-S}
or \cite[Theorem 2.5 and Proposition 5.3]{J-K-S20}).
The identity element of $\mathrm{C}_N(d_K)$
is the class $[Q_\mathrm{pr}]_N$ of
the principal form
\begin{equation*}
Q_\mathrm{pr}:=x^2+b_Kxy+c_Ky^2=
\left\{\begin{array}{ll}
\displaystyle x^2-\frac{d_K}{4}y^2 & \textrm{if}~d_K\equiv0\Mod{4},\\
\displaystyle x^2+xy+\frac{1-d_K}{4}y^2 & \textrm{if}~d_K\equiv1\Mod{4}.
\end{array}\right.
\end{equation*}
We call this group $\mathrm{C}_N(d_K)$
the extended form class group of discriminant $d_K$ and level $N$.
\par
In particular, $\mathrm{C}_1(d_K)$ is the classical
form class group of discriminant $d_K$,
originated and developed by Gauss (\cite{Gauss}) and Dirichlet (\cite{Dirichlet}).
A form $Q=a_Qx^2+b_Qxy+c_Qy^2$ in $\mathcal{Q}_1(d_K)$
is said to be reduced if
\begin{equation*}
-a_Q<b_Q\leq a_Q<c_Q\quad\textrm{or}\quad
0\leq b_Q\leq a_Q=c_Q.
\end{equation*}
And, this condition yields
\begin{equation}\label{boundaQ}
a_Q~\leq~\sqrt{\frac{|d_K|}{3}}.
\end{equation}
If we let $Q_1,\,Q_2,\,\ldots,\,Q_h$ be
all the reduced forms of discriminant $d_K$, then we have
$h=|\mathrm{C}_1(d_K)|$ and
\begin{equation}\label{C1}
\mathrm{C}_1(d_K)=\left\{[Q_1]_1,\,[Q_2]_1,\,\ldots,\,[Q_h]_1\right\}
\end{equation}
(\cite[Theorem 2.8]{Cox}).
Set
\begin{equation*}
\tau^{}_{K}=\left\{\begin{array}{cl}
\displaystyle\frac{\sqrt{d_K}}{2} & \textrm{if}~d_K\equiv0\Mod{4},\\
\displaystyle\frac{-1+\sqrt{d_K}}{2} & \textrm{if}~d_K\equiv1\Mod{4},
\end{array}\right.
\end{equation*}
and then $\tau^{}_{K}=\tau^{}_{Q_\mathrm{pr}}$ and
$\mathcal{O}_K=\left[\tau^{}_{K},\,1\right]$.
By the theory of complex multiplication,
we get the following results.

\begin{proposition}\label{CM}
Let $K$ be an imaginary quadratic field and $\mathfrak{m}$ be a nontrivial ideal of $\mathcal{O}_K$.
\begin{enumerate}
\item[\textup{(i)}] If $\mathfrak{m}=\mathcal{O}_K$, then we get
\begin{equation*}
K_\mathfrak{m}=H_K=K(j(\tau^{}_{K})).
\end{equation*}
Furthermore, if $Q_i$
\textup{(}$i=1,\,2,\,\ldots,\,h=|\mathrm{C}_{1}(d_{K})|$\textup{)}
are reduced forms of discriminant $d_K$, then the singular values
$j(\tau_{Q_i})$ are distinct \textup{(}Galois\textup{)} conjugates of $j(\tau^{}_{K})$ over $K$.
\item[\textup{(ii)}] If $\mathfrak{m}\neq\mathcal{O}_K$, then we have
\begin{equation*}
K_\mathfrak{m}=H_{K}\left(\mathfrak{h}^{}_{K}(\varphi^{}_{K}([\omega]))\right)
\end{equation*}
for any element $\omega$ of $K$ for which $[\omega]=\omega+\mathcal{O}_K$
is a generator of the $\mathcal{O}_K$-module $\mathfrak{m}^{-1}\mathcal{O}_K/\mathcal{O}_K$.
All such $\mathfrak{h}^{}_{K}(\varphi^{}_{K}([\omega]))$ are conjugate over $H_K$.
More precisely,
if $\xi_i$ \textup{(}$i=1,\,2,\,
\ldots,\,[K_\mathfrak{m}:H_K]$\textup{)}
are nonzero elements of $\mathcal{O}_K$ such that
\begin{equation*}
\left\{
(\xi_i)P_{K,\,1}(\mathfrak{m})~|~i=1,\,2,\,\ldots,\,
[K_\mathfrak{m}:H_K]\right\}=
P_K(\mathfrak{m})/P_{K,\,1}(\mathfrak{m})~(\simeq
\mathrm{Gal}(K_\mathfrak{m}/H_K)),
\end{equation*}
then
$\mathfrak{h}^{}_{K}(\varphi^{}_{K}([\xi_i\omega]))$ are all distinct conjugates of
$\mathfrak{h}^{}_{K}(\varphi^{}_{K}([\omega]))$ over $H_K$.
\end{enumerate}
\end{proposition}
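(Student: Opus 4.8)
The plan is to deduce both parts from the two classical main theorems of complex multiplication, as developed in \cite[Chapter 10]{Lang87}, so that the only substantive work is the bookkeeping of Galois conjugates.

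For part (i), I would first invoke the first main theorem of complex multiplication, which gives $H_K=K(j(\mathcal{O}_K))=K(j(\tau^{}_{K}))$ since $\mathcal{O}_K=[\tau^{}_{K},\,1]$. To pin down the conjugates I would use the identification of $\mathrm{C}_1(d_K)$ with the ideal class group $\mathrm{Cl}(\mathcal{O}_K)\cong\mathrm{Gal}(H_K/K)$ through the Artin map, under which $[Q]_1$ corresponds to the class of $[\tau^{}_{Q},\,1]$. Reciprocity then shows that the Artin symbol attached to an ideal $\mathfrak{a}$ carries $j(\tau^{}_{K})$ to $j(\mathfrak{a}^{-1})$; taking $\mathfrak{a}^{-1}$ in the class of $[\tau_{Q_i},\,1]$ exhibits $\{j(\tau_{Q_i})\}_{i=1}^h$ as Galois conjugates of $j(\tau^{}_{K})$ over $K$. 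Since the minimal polynomial of $j(\tau^{}_{K})$ over $K$ has degree $[H_K:K]=h=|\mathrm{C}_1(d_K)|$ by \eqref{C1}, and we have displayed $h$ conjugates, these singular values must be pairwise distinct and constitute the full set of conjugates.

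For part (ii), I would apply the second main theorem: for a proper nontrivial $\mathfrak{m}$, adjoining to $H_K$ the single Weber value $\mathfrak{h}_K(\varphi_K([\omega]))$ at a generator $[\omega]$ of $\mathfrak{m}^{-1}\mathcal{O}_K/\mathcal{O}_K$ yields $K_\mathfrak{m}$. The Galois structure then comes from reciprocity together with the identification $\mathrm{Gal}(K_\mathfrak{m}/H_K)\cong P_K(\mathfrak{m})/P_{K,\,1}(\mathfrak{m})$, where $H_K$ is the fixed field of $P_K(\mathfrak{m})$ inside $\mathrm{Cl}(\mathfrak{m})$. Under this isomorphism the class of a principal ideal $(\xi_i)$ acts on $\mathfrak{h}_K(\varphi_K([\omega]))$ by sending it to $\mathfrak{h}_K(\varphi_K([\xi_i\omega]))$, so it remains to verify that these values are pairwise distinct as $i$ ranges over a transversal of $P_K(\mathfrak{m})/P_{K,\,1}(\mathfrak{m})$.

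The main obstacle, and the only genuinely delicate point, is this last injectivity. Because $K\neq\mathbb{Q}(\sqrt{-1}),\,\mathbb{Q}(\sqrt{-3})$ we have $\mathcal{O}_K^\times=\{\pm1\}$, and in the present case $j(E)\neq0,\,1728$ the Weber function $\mathfrak{h}_K$ is a nonzero scalar multiple of the $x$-coordinate, so that $\mathfrak{h}_K\circ\varphi_K$ is invariant under $[z]\mapsto[-z]$ and injective otherwise. Hence $\mathfrak{h}_K(\varphi_K([\xi\omega]))=\mathfrak{h}_K(\varphi_K([\xi'\omega]))$ holds exactly when $\xi\omega\equiv\pm\xi'\omega\pmod{\mathcal{O}_K}$; since the annihilator of $[\omega]$ in $\mathcal{O}_K$ is $\mathfrak{m}$, this is equivalent to $\xi\equiv\pm\xi'\pmod{\mathfrak{m}}$, which in turn (using $\mathcal{O}_K^\times=\{\pm1\}$) is precisely the condition $(\xi)P_{K,\,1}(\mathfrak{m})=(\xi')P_{K,\,1}(\mathfrak{m})$. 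Therefore distinct classes give distinct values, there are exactly $[K_\mathfrak{m}:H_K]$ of them, and the proof is complete.
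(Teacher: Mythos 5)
Your overall strategy---deriving both parts from the main theorems of complex multiplication in Lang's Chapter 10 and then doing the Galois bookkeeping explicitly---is consistent with the paper, which in fact disposes of this proposition purely by citation (Cox, Theorem 7.7(ii); Lang, Chapter 10, Theorem 1, and Theorem 7 with its Corollary). The genuine gap is in what you yourself call the ``only genuinely delicate point.'' The proposition is stated for \emph{every} imaginary quadratic field $K$, but your distinctness argument in (ii) assumes $K\neq\mathbb{Q}(\sqrt{-1}),\,\mathbb{Q}(\sqrt{-3})$, i.e.\ $\mathcal{O}_K^\times=\{\pm1\}$ and $j(E)\neq0,\,1728$; in the paper those hypotheses enter only from Lemma \ref{HKc} onward, not here. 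For the two excluded fields the unit group is $\{\pm1,\pm\mathrm{i}\}$ or the sixth roots of unity, $\mathfrak{h}_K$ involves $x^2$ or $x^3$, and your equivalence ``$\mathfrak{h}_K(\varphi_K([\xi\omega]))=\mathfrak{h}_K(\varphi_K([\xi'\omega]))\Leftrightarrow\xi\equiv\pm\xi'\pmod{\mathfrak{m}}$'' is false as stated: for $K=\mathbb{Q}(\sqrt{-1})$, taking $\xi'=\mathrm{i}\xi$ gives equal values since $\wp(\mathrm{i}z;\,\mathbb{Z}[\mathrm{i}])=-\wp(z;\,\mathbb{Z}[\mathrm{i}])$ and $\mathfrak{h}_K$ squares the $x$-coordinate. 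The repair is to work with the full unit group throughout: in all three cases the fibers of $\mathfrak{h}_K\circ\varphi_K$ are exactly the $\mathcal{O}_K^\times$-orbits (this is precisely why the Weber function is defined with $x^2$ and $x^3$ in the special cases), and on the other side $(\xi)P_{K,\,1}(\mathfrak{m})=(\xi')P_{K,\,1}(\mathfrak{m})$ if and only if $\xi\equiv\varepsilon\xi'\pmod{\mathfrak{m}}$ for some $\varepsilon\in\mathcal{O}_K^\times$, so the two equivalence relations again coincide and the count matches for every $K$.

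Two further remarks. First, there is a logical slip in (i): from ``the minimal polynomial of $j(\tau^{}_{K})$ has degree $h$'' and ``we have displayed $h$ conjugates'' one cannot conclude the displayed values are pairwise distinct, since a list of conjugates can contain repetitions. What actually does the job is the chain: distinct reduced forms represent distinct ideal classes, the Artin map is injective, and distinct elements of $\mathrm{Gal}(H_K/K)$ take distinct values at a primitive element (if $\sigma(j(\tau^{}_{K}))=\sigma'(j(\tau^{}_{K}))$, then $\sigma^{-1}\sigma'$ fixes $K(j(\tau^{}_{K}))=H_K$). Second, this same primitivity argument settles the distinctness in (ii) at once: granting the second main theorem $K_\mathfrak{m}=H_K(\mathfrak{h}_K(\varphi_K([\omega])))$ and the reciprocity formula sending this value to $\mathfrak{h}_K(\varphi_K([\xi_i\omega]))$ under the class of $(\xi_i)$, distinct classes give distinct elements of $\mathrm{Gal}(K_\mathfrak{m}/H_K)$ (injectivity of the Artin isomorphism), and distinct Galois elements give distinct values at a primitive element. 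This bypasses the unit-group fiber analysis entirely, and with it the unnecessary restriction on $K$.
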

\begin{proof}
\begin{enumerate}
\item[(i)] See \cite[Theorem 7.7 (ii)]{Cox} and \cite[Theorem 1 in Chapter 10]{Lang87}.
\item[(ii)] See \cite[Theorem 7 and its Corollary in Chapter 10]{Lang87}.
\end{enumerate}
\end{proof}

By modifying Shimura's reciprocity law (\cite[Theorem 6.31, Propositions 6.33
and 6.34]{Shimura}),
Eum, Koo and Shin
established the following proposition.

\begin{proposition}\label{CG}
Let $K$ be an imaginary quadratic field and
$N$ be a positive integer. Then the map
\begin{eqnarray*}
\mathrm{C}_N(d_K) &\rightarrow& \mathrm{Gal}(K_{(N)}/K)\\
\mathrm{[}Q]_N & \mapsto & \left(
f(\tau^{}_{K})\mapsto f^{\left[\begin{smallmatrix}
a_Q&(b_Q-b_K)/2\\0&1\end{smallmatrix}\right]}({\tau^{}_{Q}})~|~f\in\mathcal{F}_N~
\textrm{is finite at}~\tau^{}_{K}\right)
\end{eqnarray*}
is a well-defined isomorphism.
\end{proposition}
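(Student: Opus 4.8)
The plan is to obtain the displayed map as the composite of two isomorphisms that are already in hand, and then to make the resulting Galois automorphisms explicit by means of Shimura's reciprocity law. By Artin reciprocity, the Artin map gives an isomorphism $\mathrm{Cl}(N\mathcal{O}_K)\xrightarrow{\sim}\mathrm{Gal}(K_{(N)}/K)$, $[\mathfrak a]\mapsto\sigma_{\mathfrak a}$, and by the bijection recorded earlier in this section the assignment $[Q]_N\mapsto[[\tau_Q,1]]$ is a group isomorphism $\mathrm{C}_N(d_K)\xrightarrow{\sim}\mathrm{Cl}(N\mathcal{O}_K)$. Writing $\mathfrak a_Q=[\tau_Q,1]$ and $\beta_Q=\left[\begin{smallmatrix}a_Q&(b_Q-b_K)/2\\0&1\end{smallmatrix}\right]$, the composite is the isomorphism $[Q]_N\mapsto\sigma_{\mathfrak a_Q}$, and the whole content of the proposition is the explicit identity
\[
\sigma_{\mathfrak a_Q}\bigl(f(\tau_K)\bigr)=f^{\beta_Q}(\tau_Q)\qquad(f\in\mathcal F_N\ \text{finite at}\ \tau_K).
\]
Once this identity is proved, well-definedness (independence of the representative $Q$ of its $\Gamma_1(N)$-class), the homomorphism property and bijectivity are all inherited from the two isomorphisms above, so they need no separate treatment.

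First I would recall Shimura's reciprocity law in the form of \cite[Theorem 6.31, Propositions 6.33 and 6.34]{Shimura}: the field $K_{(N)}$ is generated over $K$ by the values $f(\tau_K)$ with $f\in\mathcal F_N$ finite at $\tau_K$, and for a finite idele $s$ of $K$ with Artin automorphism $\sigma=(s,K^{\mathrm{ab}}/K)|_{K_{(N)}}$ one has
\[
f(\tau_K)^{\sigma}=f^{\,g_{\tau_K}(s)^{-1}}(\tau_K),
\]
where $g_{\tau_K}$ sends $s$ to the matrix of multiplication by $s$ on $K\otimes_{\mathbb Q}\mathbb A_{\mathbb Q,f}$ written in the basis $\{\tau_K,1\}$, and $\mathrm{GL}_2(\mathbb A_{\mathbb Q,f})$ acts on $\bigcup_M\mathcal F_M$ through its decomposition into a geometric factor in $\mathrm{GL}_2^+(\mathbb Q)$ (acting on $\mathbb H$ by fractional linear transformations) and an arithmetic factor in $\mathrm{GL}_2(\widehat{\mathbb Z})$ (acting on Fourier expansions via reduction modulo $M$).

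The main step is to compute $g_{\tau_K}(s)^{-1}$ for an idele $s$ representing the ray class of $\mathfrak a_Q$ and to rewrite the right-hand side as $f^{\beta_Q}(\tau_Q)$. The bridge is the integral-basis identity
\[
a_Q\tau_Q=\frac{-b_Q+\sqrt{d_K}}{2}=\tau_K+\frac{b_K-b_Q}{2},
\]
valid because $b_Q\equiv b_K\equiv d_K\pmod 2$, which shows that $a_Q\mathfrak a_Q=\mathbb Z\bigl(\tau_K+\tfrac{b_K-b_Q}{2}\bigr)+\mathbb Z a_Q$ has change-of-basis matrix $\left[\begin{smallmatrix}1&(b_K-b_Q)/2\\0&a_Q\end{smallmatrix}\right]$ relative to $\{\tau_K,1\}$; its inverse, rescaled by $a_Q$, is exactly $\beta_Q$. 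Tracing this through Shimura's recipe, I would factor $g_{\tau_K}(s)^{-1}=v\,\alpha$ with $v\in\mathrm{GL}_2(\widehat{\mathbb Z})$ satisfying $v\equiv\beta_Q\pmod N$ and $\alpha\in\mathrm{GL}_2^+(\mathbb Q)$ chosen so that $\alpha(\tau_K)=\tau_Q$; applying the arithmetic/geometric decomposition of the action then yields $f^{\,g_{\tau_K}(s)^{-1}}(\tau_K)=f^{v}\!\bigl(\alpha(\tau_K)\bigr)=f^{\beta_Q}(\tau_Q)$, which is the desired identity.

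The hard part will be the bookkeeping in this last step. Shimura's law is stated adelically and carries several conventions—the precise normalization of $g_{\tau_K}$, whether one uses $s$ or $s^{-1}$, and the order of the factors in the $\mathrm{GL}_2(\widehat{\mathbb Z})\times\mathrm{GL}_2^+(\mathbb Q)$ decomposition—and these must be tracked so that the geometric factor produces precisely the base point $\tau_Q$ and the arithmetic factor reduces precisely to $\beta_Q$ modulo $N$, and not to an inverse or a transpose. One must also separate the components of $s$ at primes dividing $N$ from those away from $N$: the latter can be absorbed into $\mathrm{GL}_2(\widehat{\mathbb Z})$ and act trivially on $\mathcal F_N$, while the hypothesis $\gcd(a_Q,N)=1$ is exactly what guarantees that $\beta_Q\bmod N$ lies in $\mathrm{GL}_2(\mathbb Z/N\mathbb Z)$, so that $f^{\beta_Q}$ makes sense. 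With these conventions pinned down (as in \cite{E-K-S}), the identity above follows, and the proposition is complete.
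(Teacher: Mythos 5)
The paper offers no argument of its own here: its ``proof'' is the citation to \cite[Theorem 3.8]{E-K-S}, and that theorem is established by exactly the kind of Shimura-reciprocity computation you outline, so your overall strategy is the intended one. However, your reduction contains a genuine error at its crux. Under the conventions you yourself fix (Artin symbols normalized so that uniformizers go to arithmetic Frobenius elements, together with the reciprocity law in the form $f(\tau_K)^{(s,K^{\mathrm{ab}}/K)}=f^{g_{\tau_K}(s)^{-1}}(\tau_K)$ --- and this is the normalization under which both quoted ingredients are simultaneously true), the identity $\sigma_{\mathfrak{a}_Q}\bigl(f(\tau_K)\bigr)=f^{\beta_Q}(\tau_Q)$ is false in general. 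Test it on $f=j\in\mathcal{F}_1\subseteq\mathcal{F}_N$, which is finite at $\tau_K$ and fixed by the action of $\mathrm{GL}_2(\mathbb{Z}/N\mathbb{Z})/\langle-I_2\rangle$: the main theorem of complex multiplication gives $j(\tau_K)^{\sigma_{\mathfrak{a}}}=j(\mathfrak{a}^{-1})$, so your identity would force $j(\mathfrak{a}_Q^{-1})=j(\tau_Q)=j(\mathfrak{a}_Q)$, which fails whenever the class of $\mathfrak{a}_Q$ has order greater than $2$ (e.g.\ $d_K=-23$ and $Q=2x^2+xy+3y^2$, where the class number is $3$). The same obstruction blocks your final display: if $s$ is an idele with ideal $\mathfrak{a}_Q$ and components $1$ at the primes dividing $N$, then for any factorization $g_{\tau_K}(s)^{-1}=v\alpha$ with $v\in\mathrm{GL}_2(\widehat{\mathbb{Z}})$ and $\alpha=\left[\begin{smallmatrix}p&q\\r&t\end{smallmatrix}\right]\in\mathrm{GL}_2^{+}(\mathbb{Q})$, the ideal of $s^{-1}$ works out to be $(r\tau_K+t)\,[\alpha(\tau_K),\,1]$, so the lattice $[\alpha(\tau_K),\,1]$ lies in the class of $\mathfrak{a}_Q^{-1}$; hence no admissible $\alpha$ can satisfy $\alpha(\tau_K)=\tau_Q$ unless $[\mathfrak{a}_Q]^2=1$. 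In other words, the map you actually construct, $[Q]_N\mapsto\sigma_{\mathfrak{a}_Q}$, is the proposition's map composed with inversion, not the proposition's map; this is not one of the convention ambiguities your bookkeeping paragraph could absorb, since you have already pinned the conventions down.

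The repair is cheap, and the proposition survives it: the identity that the reciprocity computation actually yields is $\sigma_{\mathfrak{a}_Q^{-1}}\bigl(f(\tau_K)\bigr)=f^{\beta_Q}(\tau_Q)$, so you should compose your form-to-ideal isomorphism with inversion on $\mathrm{Cl}(N\mathcal{O}_K)$ (equivalently, feed the Artin map the class of $\mathfrak{a}_Q^{-1}$). Since inversion is an automorphism of the abelian group $\mathrm{Cl}(N\mathcal{O}_K)$, your reduction of well-definedness, the homomorphism property and bijectivity to the two known isomorphisms then goes through unchanged. Two items you defer to ``bookkeeping'' do carry real content and should be acknowledged as such: one must check that the scalar $r\tau_K+t$ above lies in $P_{K,\,1}(N\mathcal{O}_K)$, so that the identification is one of ray classes and not merely ideal classes; and the discrepancy between the fractional ideal $\mathfrak{a}_Q=[\tau_Q,\,1]$ and the integral ideal $a_Q\mathfrak{a}_Q$ is precisely why the arithmetic factor comes out as $\beta_Q=a_Q\alpha^{-1}$, with $\alpha=\left[\begin{smallmatrix}1&(b_K-b_Q)/2\\0&a_Q\end{smallmatrix}\right]$ your change-of-basis matrix, rather than as $\alpha^{-1}$ itself --- this is where the hypothesis $\gcd(a_Q,N)=1$ enters, making $\beta_Q$ invertible modulo $N$.
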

\begin{proof}
See \cite[Theorem 3.8]{E-K-S}.
\end{proof}

\begin{remark}\label{compatible}
If $M$ and $N$ are positive integers such that
$M\,|\,N$, then the natural map
\begin{eqnarray*}
\mathrm{C}_N(d_K) &\rightarrow& \mathrm{C}_M(d_K)\\
\mathrm{[}Q]_N & \mapsto & [Q]_M
\end{eqnarray*}
is a surjective homomorphism
(\cite[Remark 2.10 (i)]{E-K-S}).
\end{remark}

\section {Some applications of inequality on singular values of $j$}

Let $K$ be an imaginary quadratic field of discriminant $d_K$.
By using inequality argument on singular values of $j$
developed in \cite{J-K-S},
we shall show that coordinates of elliptic curves in the family $\{E_{K,\,n}\}_{n\in\mathbb{Z}_{\geq0}}$ described in (\ref{modelE}) can
be used in order to generate the ray class fields of $K$.
\par
Let $h_K$ denote the class number of $K$, i.e., $h_K=|\mathrm{C}_1(d_K)|=[H_K:K]$.
It is well known that
\begin{equation*}
h_K=1\quad\Longleftrightarrow\quad
d_K=-3,\,-4,\,-7,\,-8,\,-11,\,-19,\,-43,\,-67,\,-163
\end{equation*}
(\cite[Theorem 12.34]{Cox}).
So, if $h_K\geq2$, then we have $d_K\leq-15$.

\begin{lemma}\label{j-inequalityLemma}
If $h_K\geq2$ and $d_K\leq-20$, then we achieve
\begin{equation}\label{j-inequality}
\left|
\frac{J({\tau^{}_{Q}})^2(J({\tau^{}_{Q}})-1)^3}
{J(\tau^{}_{K})^2(J(\tau^{}_{K})-1)^3}
\right|~<~877383\,\left|q_{\tau^{}_{K}}\right|^\frac{5}{2}~(<1)
\end{equation}
for all nonprincipal reduced forms $Q$ of discriminant $d_K$.
\end{lemma}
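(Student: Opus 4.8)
The plan is to reduce everything to the $q$-expansion of the elliptic modular function $j$ and to exploit the size of $\mathrm{Im}(\tau_Q)$ forced by reducedness. First I would rewrite the quantity being estimated in terms of $j$ rather than $J$. Since $J=j/1728$, one has $J^2(J-1)^3=j^2(j-1728)^3/1728^5$, so the left-hand side of (\ref{j-inequality}) equals
\[
\left|\frac{j(\tau_Q)^2(j(\tau_Q)-1728)^3}{j(\tau_K)^2(j(\tau_K)-1728)^3}\right|.
\]
The governing idea is that both $j(\tau_K)$ and $j(\tau_Q)$ are extremely close to their leading $q$-terms, so this ratio behaves like $|q_{\tau_K}/q_{\tau_Q}|^5$, and the exponent $5/2$ on the right will emerge from the extremal case $a_Q=2$.

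Second, I would record two geometric facts about reduced forms. Writing $\mathrm{Im}(\tau_Q)=\sqrt{|d_K|}/(2a_Q)$, the bound (\ref{boundaQ}), namely $a_Q\le\sqrt{|d_K|/3}$, gives $\mathrm{Im}(\tau_Q)\ge\sqrt3/2$, hence $|q_{\tau_Q}|\le e^{-\pi\sqrt3}$; this is what makes the $q$-expansion converge rapidly at $\tau_Q$. On the other hand, for a \emph{nonprincipal} reduced form one has $a_Q\ge2$, because the principal form is the only reduced form with leading coefficient $1$ (reducedness with $a_Q=1$ forces $b_Q\in\{0,1\}$ of the correct parity, which reproduces $Q_\mathrm{pr}$). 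Consequently $|q_{\tau_Q}|=e^{-\pi\sqrt{|d_K|}/a_Q}\ge e^{-\pi\sqrt{|d_K|}/2}=|q_{\tau_K}|^{1/2}$, so $|q_{\tau_K}/q_{\tau_Q}|^5\le|q_{\tau_K}|^{5/2}$, which already explains the shape of the claimed bound.

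Third, I would convert the $q$-expansion $j=q^{-1}+744+\sum_{n\ge1}c_nq^n$ (all $c_n>0$) into explicit estimates. Put $S(r)=744+\sum_{n\ge1}c_nr^n$. Because $|q_{\tau_K}|$ is minuscule (at most $e^{-\pi\sqrt{20}}$), the denominator obeys $|j(\tau_K)|\ge|q_{\tau_K}|^{-1}-S(|q_{\tau_K}|)$ and $|j(\tau_K)-1728|\ge|q_{\tau_K}|^{-1}-S(|q_{\tau_K}|)-1728$, while for the numerator $|j(\tau_Q)|\le|q_{\tau_Q}|^{-1}+S(|q_{\tau_Q}|)$ and $|j(\tau_Q)-1728|\le|q_{\tau_Q}|^{-1}+S(|q_{\tau_Q}|)+1728$. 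Factoring $|q_{\tau_Q}|^{-5}$ out of the numerator and $|q_{\tau_K}|^{-5}$ out of the denominator, the ratio is at most $|q_{\tau_K}/q_{\tau_Q}|^5$ times a correction factor $P(|q_{\tau_Q}|)/Q(|q_{\tau_K}|)$, where $P(r)=(1+S(r)r)^2(1+(S(r)+1728)r)^3$ is increasing in $r$ and $Q(\rho)=(1-S(\rho)\rho)^2(1-(S(\rho)+1728)\rho)^3$ is close to $1$.

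Fourth, and this is the decisive and trickiest step, I would decouple the two dependences on $a_Q$. The identity $|q_{\tau_K}/q_{\tau_Q}|^5=|q_{\tau_K}|^{5(1-1/a_Q)}=|q_{\tau_K}|^{5/2}\cdot|q_{\tau_K}|^{5/2-5/a_Q}$, together with $a_Q\ge2$ (so $5/2-5/a_Q\ge0$ and $|q_{\tau_K}|<1$), shows the stray power $|q_{\tau_K}|^{5/2-5/a_Q}$ is at most $1$. Using in addition that $P$ is increasing and $|q_{\tau_Q}|\le e^{-\pi\sqrt3}$, the full ratio is bounded by $|q_{\tau_K}|^{5/2}\cdot P(e^{-\pi\sqrt3})/Q(|q_{\tau_K}|)$, and it remains only to verify the numerical inequality $P(e^{-\pi\sqrt3})/Q(|q_{\tau_K}|)<877383$ uniformly for $|q_{\tau_K}|\le e^{-\pi\sqrt{20}}$. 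The main obstacle is the quantitative control of $S(e^{-\pi\sqrt3})=\sum c_ne^{-n\pi\sqrt3}$: one must bound the tail of the coefficients $c_n$ well enough (for instance through a crude estimate of the form $c_n\le e^{4\pi\sqrt n}$, which is summable against $e^{-n\pi\sqrt3}$, or by invoking the explicit inequality of \cite{J-K-S}) that $P(e^{-\pi\sqrt3})$ indeed falls below $877383$, whereas the lower bound $Q(|q_{\tau_K}|)>0.99$ is immediate from the smallness of $|q_{\tau_K}|$. Finally, the parenthetical clause $877383\,|q_{\tau_K}|^{5/2}<1$ follows because $d_K\le-20$ forces $|q_{\tau_K}|=e^{-\pi\sqrt{|d_K|}}\le e^{-\pi\sqrt{20}}<877383^{-2/5}$.
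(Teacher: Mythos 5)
The paper does not actually prove this lemma: its entire ``proof'' is the citation ``See \cite[Lemma 6.3 (ii)]{J-K-S}.'' Your proposal therefore supplies what the paper omits, and it is correct; it is also very likely the same type of argument that underlies the cited result. All the structural steps check out: the identity $J^2(J-1)^3=j^2(j-1728)^3/1728^5$; the bound $|q_{\tau^{}_{Q}}|\le e^{-\pi\sqrt 3}$ coming from (\ref{boundaQ}); the fact that a nonprincipal reduced form has $a_Q\ge 2$ (your parity argument for $a_Q=1$ is right, and the case $a_Q=c_Q=1$ would force $d_K\in\{-3,-4\}$, excluded by $d_K\le-20$); the factorization of $|q|^{-5}$ out of numerator and denominator with correction factors $P$ and $Q$; and the decoupling $|q_{\tau^{}_{K}}/q_{\tau^{}_{Q}}|^5=|q_{\tau^{}_{K}}|^{5/2}\,|q_{\tau^{}_{K}}|^{5/2-5/a_Q}\le |q_{\tau^{}_{K}}|^{5/2}$. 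Numerically your route even beats the stated constant: with $r=e^{-\pi\sqrt3}\approx 0.00433$ one finds $S(r)\approx 2079$, hence $P(r)\approx (10.0)^2(17.5)^3\approx 5.4\times 10^5$, while $Q(|q_{\tau^{}_{K}}|)\ge Q(e^{-\pi\sqrt{20}})>0.99$; so your method yields a bound of roughly $5.5\times 10^5\,|q_{\tau^{}_{K}}|^{5/2}$, comfortably below $877383\,|q_{\tau^{}_{K}}|^{5/2}$, and the final parenthetical $877383\,|q_{\tau^{}_{K}}|^{5/2}<1$ follows exactly as you say.

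The one step you must handle with care is the estimate of $S(e^{-\pi\sqrt3})$, which you left soft. If the crude bound $c_n\le e^{4\pi\sqrt n}$ is applied to \emph{every} $n\ge 1$, you get $S(e^{-\pi\sqrt3})\lesssim 744+1242+983+231+29+\cdots\approx 3.2\times 10^3$, which gives $P(e^{-\pi\sqrt3})\approx 2.5\times 10^6>877383$, and the proof collapses. So the crude bound may be used only for the tail, exactly as your word ``tail'' suggests: take the exact values $c_1=196884$, $c_2=21493760$, $c_3=864299970$, $c_4=20245856256$, $c_5=333202640600$, and bound $\sum_{n\ge 6}c_ne^{-n\pi\sqrt 3}<0.2$ by the crude estimate. (A bound of the strength $c_n\le e^{4\pi\sqrt n}+449\,e^{2\pi\sqrt n}$ is elementary: positivity of the $c_n$, the value $j(\mathrm{i})=1728$ and the modularity $j(-1/\tau)=j(\tau)$ give it directly; alternatively one can quote an explicit estimate from the literature.) With that reading your argument is complete and self-contained, which is more than the paper itself provides.
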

\begin{proof}
See \cite[Lemma 6.3 (ii)]{J-K-S}.
\end{proof}

\begin{remark}\label{-15case}
If $d_K=-15$, then we get $\mathrm{C}_1(d_K)=\{[Q_1]_1,\,[Q_2]_1\}$ with
\begin{equation*}
Q_1=x^2+xy+4y^2\quad\textrm{and}\quad
Q_2=2x^2+xy+2y^2.
\end{equation*}
Moreover, we have
\begin{equation*}
j(\tau^{}_{K})=j(\tau^{}_{Q_1})=-52515-85995\frac{1+\sqrt{5}}{2}
\quad\textrm{and}\quad
j(\tau^{}_{Q_2})=-52515-85995\,\frac{1-\sqrt{5}}{2}
\end{equation*}
(\cite[Example 6.2.2]{Silverman}).
One can check that the inequality  (\ref{j-inequality}) also holds true.
\end{remark}

\begin{lemma}\label{HKc}
Let $K$ be an imaginary quadratic field
other than $\mathbb{Q}(\sqrt{-1})$ and $\mathbb{Q}(\sqrt{-3})$. Then we attain
\begin{equation*}
H_K=K(C_K^n)\quad\textrm{for every}~n\in\mathbb{Z}_{>0}.
\end{equation*}
\end{lemma}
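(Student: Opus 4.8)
The plan is to prove the two inclusions separately, the nontrivial one through a primitivity argument powered by Lemma \ref{j-inequalityLemma}. Since $C_K=J_K^2(J_K-1)^3$ is a polynomial expression in $J_K=\frac{1}{1728}j(\tau^{}_{K})$, we have $C_K\in K(j(\tau^{}_{K}))=H_K$, whence $C_K^n\in H_K$ and $K(C_K^n)\subseteq H_K$ for every $n$. Note also that the hypothesis $K\neq\mathbb{Q}(\sqrt{-1}),\,\mathbb{Q}(\sqrt{-3})$ forces $j(\tau^{}_{K})\neq0,\,1728$, so $C_K\neq0$; and since the conjugates $j(\tau_{Q})$ of $j(\tau^{}_{K})$ over $K$ cannot equal the rational numbers $0$ or $1728$, we also have $C_Q\neq0$ for every reduced form $Q$, where I write $C_Q=J(\tau_Q)^2(J(\tau_Q)-1)^3$.

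For the reverse inclusion I first dispose of the case $h_K=1$: here $H_K=K$, so $C_K^n\in K$ and $K(C_K^n)=K=H_K$ trivially. Assume therefore $h_K\geq2$. By Proposition \ref{CM}(i) the conjugates of $j(\tau^{}_{K})$ over $K$ are exactly the distinct singular values $j(\tau_{Q_i})$ $(i=1,\,\ldots,\,h_K)$ attached to the reduced forms of discriminant $d_K$, and $\mathrm{Gal}(H_K/K)$ acts simply transitively on them. Each $\sigma$ sending $j(\tau^{}_{K})\mapsto j(\tau_Q)$ sends $C_K^n\mapsto C_Q^n$, because $C_K^n$ is a fixed rational expression in $j(\tau^{}_{K})$; moreover $\sigma=\mathrm{id}$ corresponds to the principal class. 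Hence $K(C_K^n)$ equals the fixed field of the stabiliser of $C_K^n$, and $K(C_K^n)=H_K$ holds precisely when
\[
C_Q^n\neq C_K^n\quad\textrm{for every nonprincipal reduced form}~Q.
\]

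Suppose to the contrary that $C_Q^n=C_K^n$ for some nonprincipal reduced $Q$. Then $(C_Q/C_K)^n=1$, so $C_Q/C_K$ is an $n$th root of unity and in particular $\left|C_Q/C_K\right|=1$. On the other hand, since $h_K\geq2$ forces $d_K\leq-15$, Lemma \ref{j-inequalityLemma} (supplemented by Remark \ref{-15case} for the single boundary case $d_K=-15$) yields
\[
\left|\frac{C_Q}{C_K}\right|
=\left|\frac{J(\tau_Q)^2(J(\tau_Q)-1)^3}{J(\tau^{}_{K})^2(J(\tau^{}_{K})-1)^3}\right|
<877383\,\left|q_{\tau^{}_{K}}\right|^{\frac{5}{2}}<1,
\]
a contradiction. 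Thus $C_Q^n\neq C_K^n$ for all nonprincipal reduced $Q$, giving $K(C_K^n)=H_K$.

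The crux of the argument is the passage from the algebraic equality $C_Q^n=C_K^n$ to the purely metric condition $\left|C_Q/C_K\right|=1$: it is precisely the \emph{strict} inequality $\left|C_Q/C_K\right|<1$ of Lemma \ref{j-inequalityLemma}, rather than mere distinctness of the singular values, that handles all exponents $n\in\mathbb{Z}_{>0}$ uniformly, since a complex number of modulus strictly less than $1$ can never be a root of unity. The only delicacy is the discriminant $d_K=-15$, which lies outside the range $d_K\leq-20$ of Lemma \ref{j-inequalityLemma} and is therefore dealt with by the explicit verification recorded in Remark \ref{-15case}.
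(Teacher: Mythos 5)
Your proof is correct and follows essentially the same route as the paper: both arguments reduce to showing, via Proposition \ref{CM} (i) and Galois theory, that any $\sigma\in\mathrm{Gal}(H_K/K)$ fixing $C_K^n$ must correspond to the principal form, and both derive this from the strict modulus inequality $\left|C_Q/C_K\right|<1$ of Lemma \ref{j-inequalityLemma} together with Remark \ref{-15case} for $d_K=-15$. Your contrapositive phrasing (assuming $C_Q^n=C_K^n$ and contradicting the inequality) is just a cosmetic variant of the paper's direct argument, and your closing observation about why strictness of the inequality is what handles all $n$ uniformly is exactly the point the paper's proof exploits.
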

\begin{proof}
If $h_K=1$, then the assertion is obvious because $H_K=K$. 
\par
Now, assume that $h_K\geq2$. Let $\sigma$ be an element of $\mathrm{Gal}(H_K/K)$ which 
fixes $C_K^n$. Then we find 
by Proposition \ref{CM} (i) that
\begin{equation*}
1=\left|
\frac{(C_K^n)^\sigma}{C_K^n}
\right|
=\left|\frac{\{J(\tau^{}_{K})^2(J(\tau^{}_{K})-1)^3\}^\sigma}{J(\tau^{}_{K})^2(J(\tau^{}_{K})-1)^3}\right|^n
=\left|\frac{J({\tau^{}_{Q}})^2(J({\tau^{}_{Q}})-1)^3}{J(\tau^{}_{K})^2(J(\tau^{}_{K})-1)^3}\right|^n
\end{equation*}
for some reduced form $Q$ of 
discriminant $d_K$.
Thus $Q$ must be $Q_\mathrm{pr}$ by Lemma \ref{j-inequalityLemma} and Remark \ref{-15case},
and hence $\sigma$ is the identity element of $\mathrm{Gal}(H_K/K)$
again by Proposition \ref{CM} (i). 
This observation implies by Galois theory that
$H_K$ is generated by $C_K^n$ over $K$. 
\end{proof}

\begin{theorem}\label{xy}
Let $K$ be an imaginary quadratic field other than $\mathbb{Q}(\sqrt{-1})$
and $\mathbb{Q}(\sqrt{-3})$, and let $\mathfrak{m}$
be a nontrivial proper integral ideal of $\mathcal{O}_K$.
Let $\omega$ be an element of $K$ such that
$\omega+\mathcal{O}_K$ is a generator of
the $\mathcal{O}_K$-module $\mathfrak{m}^{-1}/\mathcal{O}_K$.
If $K_\mathfrak{m}$ properly contains $H_K$, then we have
\begin{equation*}
K_\mathfrak{m}=K\left({x^{}_{K,\,n}}(\omega),\,{y^{}_{K,\,n}}(\omega)^2\right)
\quad
\textrm{for every $n\in\mathbb{Z}_{\geq0}$}.
\end{equation*}
\end{theorem}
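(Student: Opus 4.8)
The plan is to prove the two inclusions between $L:=K\big({x^{}_{K,\,n}}(\omega),\,{y^{}_{K,\,n}}(\omega)^2\big)$ and $K_\mathfrak{m}$ separately, the reverse one being the substantial part. Throughout I would write $h=\mathfrak{h}^{}_{K}(\varphi^{}_{K}([\omega]))$ and $X={x^{}_{K,\,n}}(\omega)$, recalling from (\ref{xchp}) that $X=C_K^n h$, while the defining equation (\ref{modelE}) of $E_{K,\,n}$ shows that ${y^{}_{K,\,n}}(\omega)^2=4X^3-\alpha X-\beta$ with $\alpha=\frac{J_K(J_K-1)}{27}C_K^{2n}$ and $\beta=\frac{J_K(J_K-1)^2}{27^2}C_K^{3n}$, both lying in $H_K=K(j(\tau^{}_{K}))$. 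For the easy inclusion $L\subseteq K_\mathfrak{m}$ I note that $C_K^n\in H_K\subseteq K_\mathfrak{m}$ and $h\in K_\mathfrak{m}$ by Proposition \ref{CM}(ii), so $X\in K_\mathfrak{m}$, and then ${y^{}_{K,\,n}}(\omega)^2=4X^3-\alpha X-\beta\in K_\mathfrak{m}$ as well.

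For the reverse inclusion I would take an arbitrary $\sigma\in\mathrm{Gal}(K_\mathfrak{m}/L)$ and aim to show $\sigma=\mathrm{id}$; since $K_\mathfrak{m}=H_K(h)$ by Proposition \ref{CM}(ii), it suffices to prove that $\sigma$ fixes $H_K$ pointwise and fixes $h$. The crucial observation is that $\sigma$ fixes both $X$ and $W:=4X^3-{y^{}_{K,\,n}}(\omega)^2=\alpha X+\beta$, because both lie in $L$. Let $Q$ be the reduced form of discriminant $d_K$ with $j(\tau^{}_{K})^\sigma=j({\tau^{}_{Q}})$ provided by Proposition \ref{CM}(i); the goal then reduces to showing $Q=Q_\mathrm{pr}$. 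Since $J_K$ and $C_K=J_K^2(J_K-1)^3$ are singular values, $\sigma$ sends them to $J_Q:=J({\tau^{}_{Q}})$ and $C_Q:=J_Q^2(J_Q-1)^3$, and I put $r=C_Q/C_K\in H_K$.

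The heart of the argument is to extract from the two fixed quantities a relation forcing $Q=Q_\mathrm{pr}$. Writing out $X=X^\sigma$ gives $C_K^n h=C_Q^n h^\sigma$, hence $h^\sigma=r^{-n}h$; substituting this together with $C_Q^{3n}=r^{3n}C_K^{3n}$ into $W=W^\sigma$ and cancelling $C_K^{3n}/27$ yields
\begin{equation*}
h\,\big[J_K(J_K-1)-J_Q(J_Q-1)r^{2n}\big]=\tfrac{1}{27}\big[J_Q(J_Q-1)^2r^{3n}-J_K(J_K-1)^2\big].
\end{equation*}
Now I argue by cases. If the bracket multiplying $h$ is nonzero, then $h$ is a quotient of elements of $H_K$, so $h\in H_K$ and $K_\mathfrak{m}=H_K(h)=H_K$, contradicting the hypothesis that $K_\mathfrak{m}$ properly contains $H_K$. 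If that bracket vanishes, then the right-hand bracket must vanish too; dividing the two resulting identities gives $J_K-1=(J_Q-1)r^n$, and feeding this back in forces $J_K=J_Q$ (here I use that $J_K,J_Q\neq 0,1$, guaranteed by $K\neq\mathbb{Q}(\sqrt{-1}),\mathbb{Q}(\sqrt{-3})$), contradicting that $Q$ is nonprincipal. Hence $Q=Q_\mathrm{pr}$, so $\sigma$ fixes $j(\tau^{}_{K})$ and therefore all of $H_K$. As $\sigma$ was arbitrary, $H_K\subseteq L$, whence $C_K^n\in L$ and $h=X/C_K^n\in L$, so $K_\mathfrak{m}=H_K(h)\subseteq L$ and equality follows.

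I expect the main obstacle to be precisely the degenerate branch of this case analysis, where the coefficient of $h$ vanishes: one must verify that the simultaneous vanishing of both brackets is incompatible with $j({\tau^{}_{Q}})\neq j(\tau^{}_{K})$, which is where the exclusion of $\mathbb{Q}(\sqrt{-1})$ and $\mathbb{Q}(\sqrt{-3})$ is used. It is worth emphasizing that this argument relies only on Proposition \ref{CM} and elementary algebra: the additional information carried by the $y$-coordinate (through $W$) is exactly what removes any need for the size estimate of Lemma \ref{j-inequalityLemma}, in contrast with the $x$-coordinate-only statements treated later. Alternatively, once one knows that every $\sigma\in\mathrm{Gal}(K_\mathfrak{m}/L)$ fixes $C_K^n$, the inclusion $H_K\subseteq L$ for $n\geq 1$ can be read off directly from Lemma \ref{HKc}.
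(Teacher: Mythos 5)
Your proof is correct, and up to the decisive fork it coincides with the paper's: both arguments take a $\sigma$ fixing $X={x^{}_{K,\,n}}(\omega)$ and ${y^{}_{K,\,n}}(\omega)^2$, derive the linear relation $(A^\sigma-A)X=B-B^\sigma$ of (\ref{AABB}) (your displayed identity is precisely (\ref{AABB}) rescaled by $27/C_K^{3n}$), and handle the non-degenerate branch $A^\sigma\neq A$ identically: $h$ (equivalently $X$) then lies in $H_K$, so $K_\mathfrak{m}=H_K(X)=H_K$, contradicting $K_\mathfrak{m}\supsetneq H_K$. The genuine divergence is the degenerate branch $A^\sigma=A$, i.e.\ your vanishing bracket. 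There the paper notes $B^\sigma=B$, hence $\sigma$ fixes $AB=C_K^{5n+1}/27^3$, and quotes Lemma \ref{HKc} --- whose proof rests on the inequality of Lemma \ref{j-inequalityLemma} and Remark \ref{-15case} --- to force $\sigma\in\mathrm{Gal}(K_\mathfrak{m}/H_K)$, against (\ref{sigmanot}). You instead dispose of this branch by pure algebra: the pair of identities $A^\sigma=A$ and $B^\sigma=B$, rewritten in terms of $J_K$, $J_Q=J({\tau^{}_{Q}})$ and $r=C_K^\sigma/C_K$, forces $r^n=1$ and $J_Q=J_K$, whence $\sigma$ fixes $H_K=K(j(\tau^{}_{K}))$; your divisions are legitimate since $J_K,\,J_Q\neq0,\,1$ (for $J_Q$ because $0$ and $1$ are rational, hence equal to all their Galois conjugates, so $J_Q\in\{0,1\}$ would force $J_K\in\{0,1\}$). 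Consequently your version of Theorem \ref{xy} uses only Proposition \ref{CM} and Galois theory, with no appeal to the analytic estimates behind Lemma \ref{HKc}; this is a real gain in self-containedness, and it substantiates your closing remark that the extra information from $y^2$ replaces the size estimates. What the paper's route buys is brevity: Lemma \ref{HKc} is needed later anyway (Theorem \ref{inertcase}), so citing it collapses the degenerate case to one line. One phrasing nit: ``contradicting that $Q$ is nonprincipal'' is backwards, since you never assumed $Q$ nonprincipal --- what you show is that $J_Q=J_K$ together with the distinctness statement of Proposition \ref{CM}(i) rules out $Q\neq Q_\mathrm{pr}$.
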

\begin{proof}
For simplicity, let
\begin{equation*}
X={x^{}_{K,\,n}}(\omega)
=C_K^n\mathfrak{h}^{}_{K}(\varphi^{}_{K}([\omega]))\quad\textrm{and}\quad Y={y^{}_{K,\,n}}(\omega).
\end{equation*}
Set $L=K(X,\,Y^2)$ which is a subfield of $K_\mathfrak{m}$ by Proposition \ref{CM}
and the Weierstrass equation for $E_{K,\,n}$ stated in (\ref{modelE}).
\par
Suppose on the contrary that $K_\mathfrak{m}\neq L$.
Then there is a nonidentity element $\sigma$ of $\mathrm{Gal}(K_\mathfrak{m}/K)$ which
leaves both $X$ and $Y^2$ fixed.
Note further that
\begin{equation}\label{sigmanot}
\sigma\not\in\mathrm{Gal}(K_\mathfrak{m}/H_K)
\end{equation}
because $K_\mathfrak{m}=H_K(X)$ by Proposition \ref{CM} (ii).
By acting $\sigma$
on both sides of the equality
\begin{equation*}
Y^2=4X^3-AX-B
\quad\textrm{with}~A=\frac{J_K(J_K-1)}{27}C_K^{2n}\quad
\textrm{and}\quad B=\frac{J_K(J_K-1)^2}{27^2}C_K^{3n},
\end{equation*}
we obtain
\begin{equation*}
Y^2=4X^3-A^\sigma
X-B^\sigma.
\end{equation*}
It then follows that
\begin{equation}\label{AABB}
(A^\sigma-A)X=B-B^\sigma.
\end{equation}
Since
\begin{equation*}
AB=\frac{C_K^{5n+1}}{27^3}
\end{equation*}
generates $H_K$ over $K$
by Lemma \ref{HKc}, we deduce by (\ref{sigmanot}) and (\ref{AABB}) that
$A^\sigma-A\neq0$ and
\begin{equation*}
X=-\frac{B^\sigma-B}{A^\sigma-A}\in H_K.
\end{equation*}
Then we get
\begin{equation*}
H_K=H_K(X)=K_\mathfrak{m},
\end{equation*}
which contradicts the hypothesis $K_\mathfrak{m}\supsetneq H_K$.
\par
Therefore we conclude that
\begin{equation*}
K_\mathfrak{m}=L=K\left({x^{}_{K,\,n}}(\omega),\,{y^{}_{K,\,n}}(\omega)^2\right).
\end{equation*}

\end{proof}

\begin{proposition}\label{x-generator}
Let $K$ be an imaginary quadratic field other than $\mathbb{Q}(\sqrt{-1})$
and $\mathbb{Q}(\sqrt{-3})$, and let $\mathfrak{m}$
be a nontrivial proper integral ideal of $\mathcal{O}_K$.
Let $\omega$ be an element of $K$ such that
$\omega+\mathcal{O}_K$ is a generator of
the $\mathcal{O}_K$-module $\mathfrak{m}^{-1}/\mathcal{O}_K$.
Then we have
\begin{equation*}
K_\mathfrak{m}=K\left({x^{}_{K,\,n}}(\omega)\right)\quad\textrm{for sufficiently large $n\in\mathbb{Z}_{\geq0}$}.
\end{equation*}
\end{proposition}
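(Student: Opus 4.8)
The plan is to prove the sharper statement that the stabilizer of $X:=x_{K,n}(\omega)$ in $\mathrm{Gal}(K_\mathfrak{m}/K)$ is trivial once $n$ is large; since $X\in K_\mathfrak{m}$ by Proposition~\ref{CM}(ii) and $K_\mathfrak{m}/K$ is abelian, this is equivalent to $K(X)=K_\mathfrak{m}$. The decisive structural fact is the factorization
$$X \;=\; C_K^n\,W,\qquad W:=\mathfrak{h}_K(\varphi_K([\omega])),$$
read off from (\ref{xchp}): the entire dependence on $n$ is concentrated in the factor $C_K^n\in H_K$, whereas $W$ is a single nonzero algebraic number independent of $n$. (Nonvanishing of $W$ is immediate when $K_\mathfrak{m}\supsetneq H_K$, since then $W\notin H_K$; it is the one small point to verify separately in the degenerate case $K_\mathfrak{m}=H_K$, where $g_2g_3\neq0$ prevents $\wp(\omega)$ from vanishing.) I fix once and for all a complex embedding of $K_\mathfrak{m}$ under which $j(\tau_K)$, and hence $C_K$ and $W$, take their analytic values.

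Now suppose $\sigma\in\mathrm{Gal}(K_\mathfrak{m}/K)$ satisfies $X^\sigma=X$ with $\sigma\neq1$, and seek a contradiction for large $n$. Applying $\sigma$ to $X=C_K^nW$ and using $C_K\in H_K$ gives
$$\left(\frac{C_K^\sigma}{C_K}\right)^{\!n}=\frac{W}{W^\sigma}.$$
If $\sigma$ restricts to the identity on $H_K$, then $C_K^\sigma=C_K$, so the left side is $1$ and $W=W^\sigma$; but $K_\mathfrak{m}=H_K(W)$ by Proposition~\ref{CM}(ii), so $\sigma$ fixes $K_\mathfrak{m}$ pointwise, contradicting $\sigma\neq1$. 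This disposes of that possibility for every $n$.

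There remains the case $\sigma|_{H_K}\neq\mathrm{id}$, which is the heart of the matter. Exactly as in the proof of Lemma~\ref{HKc}, the isomorphism $\mathrm{C}_1(d_K)\simeq\mathrm{Gal}(H_K/K)$ of Proposition~\ref{CM}(i) identifies $C_K^\sigma/C_K$ with the ratio $J(\tau_Q)^2(J(\tau_Q)-1)^3/\bigl(J(\tau_K)^2(J(\tau_K)-1)^3\bigr)$ for some nonprincipal reduced form $Q$ of discriminant $d_K$. Lemma~\ref{j-inequalityLemma}, with Remark~\ref{-15case} covering $d_K=-15$, then yields the strict bound $|C_K^\sigma/C_K|<877383\,|q_{\tau_K}|^{5/2}<1$, whence $|W/W^\sigma|=|C_K^\sigma/C_K|^n\to0$ as $n\to\infty$. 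This collides with the fact that $|W/W^\sigma|$ is a fixed positive constant, so the displayed equality fails for all $n$ beyond a threshold determined by $\sigma$.

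Since $\mathrm{Gal}(K_\mathfrak{m}/K)$ is finite, choosing $n_0$ to be the maximum of these finitely many thresholds forces the stabilizer of $X$ to be trivial, and hence $K(X)=K_\mathfrak{m}$, for every $n\ge n_0$. The main obstacle is precisely the strict inequality supplied by Lemma~\ref{j-inequalityLemma}: it is what turns $|C_K^\sigma/C_K|^n$ into a quantity decaying geometrically to $0$, overwhelming the fixed constant $|W/W^\sigma|$, and without the sharp constant $877383\,|q_{\tau_K}|^{5/2}<1$ the argument would not close. The qualitative threshold obtained here is then made fully effective in the quantitative theorem recorded as item (ii) of the Introduction.
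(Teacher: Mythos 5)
Your proof is correct, and it rests on the same analytic core as the paper's — the decomposition $x_{K,\,n}(\omega)=C_K^{n}W$ from (\ref{xchp}) together with the strict inequality of Lemma \ref{j-inequalityLemma} (and Remark \ref{-15case} for $d_K=-15$) — but the field-theoretic packaging is genuinely different. The paper first disposes of $h_K=1$, then works with the norm: writing $d=[K_\mathfrak{m}:H_K]$, it shows that $\mathrm{N}_{K_\mathfrak{m}/H_K}(x_{K,\,n}(\omega))=C_K^{nd}\,\mathrm{N}_{K_\mathfrak{m}/H_K}(W)$ differs in absolute value from each of its nontrivial $\mathrm{Gal}(H_K/K)$-conjugates once $n$ is large, so this norm generates $H_K$ over $K$; then, since $K(x_{K,\,n}(\omega))$ is abelian over $K$ and therefore contains that norm, it gets $H_K\subseteq K(x_{K,\,n}(\omega))$ and concludes from $H_K(x_{K,\,n}(\omega))=K_\mathfrak{m}$. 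You instead kill the stabilizer of $x_{K,\,n}(\omega)$ in $\mathrm{Gal}(K_\mathfrak{m}/K)$ directly, splitting on the restriction to $H_K$: elements with $\sigma|_{H_K}=\mathrm{id}$ are excluded for every $n$ by $K_\mathfrak{m}=H_K(W)$, and elements with $\sigma|_{H_K}\neq\mathrm{id}$ are excluded for large $n$ because $|C_K^{\sigma}/C_K|^{n}\to0$ cannot equal the fixed constant $|W/W^{\sigma}|$. Your route dispenses with the norm and the abelianness step (only normality of $K_\mathfrak{m}/K$ is used), absorbs the paper's $h_K=1$ case automatically (Case B is then vacuous), and pinpoints exactly which Galois elements force $n$ to be large — the same architecture as the paper's effective Theorem \ref{conditional}. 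The paper's route keeps all estimates among elements of $H_K$ and their $\mathrm{Gal}(H_K/K)$-conjugates, so Proposition \ref{CM} (i) applies verbatim and one constant per element of $\mathrm{Gal}(H_K/K)$ suffices.

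One caveat, which you share with the paper: both arguments divide by a quantity built from $W$ (you by $W^{\sigma}$, the paper by $\mathrm{N}_{K_\mathfrak{m}/H_K}(\mathfrak{h}_{K}(\varphi_{K}([\omega])))^{\sigma_i}$), hence both tacitly assume $W\neq0$. Your observation that $W\notin H_K$ settles this whenever $K_\mathfrak{m}\supsetneq H_K$, but your justification in the degenerate case $K_\mathfrak{m}=H_K$ — that $g_2g_3\neq0$ prevents $\wp(\omega)$ from vanishing — is only valid when $[\omega]$ has order $2$ or $3$, where a torsion point with $x$-coordinate $0$ would force $g_3=0$, respectively $g_2=0$ (via the division polynomial). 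Moduli with $K_\mathfrak{m}=H_K$ can also have generators of order $4$ or $6$ (e.g.\ $\mathfrak{m}=\mathfrak{p}^2$ with $\mathfrak{p}$ a prime above a split $2$), and there one needs a different reason, for instance that $x=0$ at a primitive $4$- or $6$-torsion point forces $j$ to be non-integral, which is impossible for a CM $j$-invariant. Since the paper's own proof makes the identical tacit assumption, this is not a gap relative to the paper, but it is worth closing if you want every case allowed by the statement covered.
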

\begin{proof}
Note that
$C_K=J(\tau^{}_{K})^2(J(\tau^{}_{K})-1)^3$ is nonzero
because $K$ is different from $\mathbb{Q}(\sqrt{-1})$
and $\mathbb{Q}(\sqrt{-3})$.
There are two possible cases\,: $h_K=1$ or $h_K\geq2$.
\begin{enumerate}
\item[Case 1.] If $h_K=1$ (and so $H_K=K$), then for any $n\in\mathbb{Z}_{\geq0}$
\begin{eqnarray*}
K\left({x^{}_{K,\,n}}(\omega)\right)&=&H_K\left(C_K^n\mathfrak{h}^{}_{K}(\varphi^{}_{K}([\omega]))\right)
\quad\textrm{by (\ref{xchp})}\\
&=&H_K\left(\mathfrak{h}^{}_{K}(\varphi^{}_{K}([\omega]))\right)\quad\textrm{by Proposition \ref{CM} (i)}\\
&=&K_\mathfrak{m}\quad\textrm{by Proposition \ref{CM} (ii)}.
\end{eqnarray*}
\item[Case 2.] Consider the case where $h_K\geq2$.
Let $\mathrm{Gal}(H_K/K)=\{\sigma_1=\mathrm{id},\,\sigma_2,\,
\ldots,\,\sigma_{h_K}\}$ and $d=[K_\mathfrak{m}:H_K]$.
Observe by Proposition \ref{CM} (i) that for each $i=1,\,2,\,\ldots,\,h_K$
there is a unique reduced form $Q_i$ of discriminant $d_K$, and so
$J(\tau^{}_{K})^{\sigma_i}=J(\tau^{}_{Q_i})$.
By Lemma \ref{j-inequalityLemma} and Remark \ref{-15case}
we can take a sufficiently large positive integer $m$ so that
\begin{equation*}
\left|\frac{C_K^{\sigma_i}}{C_K}
\right|^{md}=
\left|
\frac{J(\tau^{}_{Q_i})^2(J(\tau^{}_{Q_i})-1)^3}
{J(\tau^{}_{K})^2(J(\tau^{}_{K})-1)^3}
\right|^{md}<\left|
\frac{\mathrm{N}_{K_\mathfrak{m}/H_K}\left(\mathfrak{h}^{}_{K}(\varphi^{}_{K}([\omega]))\right)}
{\mathrm{N}_{K_\mathfrak{m}/H_K}\left(\mathfrak{h}^{}_{K}(\varphi^{}_{K}([\omega]))\right)^{\sigma_i}}\right|
\quad\textrm{for all}~i=2,\,3,\,\ldots,\,h_K.
\end{equation*}
We then see by (\ref{xchp}) and
Proposition \ref{CM} (i) that if $n\in\mathbb{Z}_{\geq0}$
satisfies $n\geq m$ and $2\leq i\leq h_K$, then
\begin{equation*}
\left|
\frac{\mathrm{N}_{K_\mathfrak{m}/H_K}({x^{}_{K,\,n}}(\omega))^{\sigma_i}}
{\mathrm{N}_{K_\mathfrak{m}/H_K}({x^{}_{K,\,n}}(\omega))}
\right|=\left|
\frac{C_K^{\sigma_i}}{C_K}\right|^{nd}
\left|
\frac{\mathrm{N}_{K_\mathfrak{m}/H_K}\left(\mathfrak{h}^{}_{K}(\varphi^{}_{K}([\omega]))\right)^{\sigma_i}}
{\mathrm{N}_{K_\mathfrak{m}/H_K}\left(\mathfrak{h}^{}_{K}(\varphi^{}_{K}([\omega]))\right)}\right|<1.
\end{equation*}
This observation implies that
\begin{equation}\label{KNH}
K\left(\mathrm{N}_{K_\mathfrak{m}/H_K}({x^{}_{K,\,n}}(\omega))\right)=H_K.
\end{equation}
Hence we derive that if $n\geq m$, then
\begin{eqnarray*}
K\left({x^{}_{K,\,n}}(\omega)\right)&=&
K\left({x^{}_{K,\,n}}(\omega),\,\mathrm{N}_{K_\mathfrak{m}/H_K}({x^{}_{K,\,n}}(\omega))\right)\\
&&\quad\quad\quad\textrm{since $K({x^{}_{K,\,n}}(\omega))$ ($\subseteq K_\mathfrak{m}$)
is an abelian extension of $K$}\\
&=&H_K\left({x^{}_{K,\,n}}(\omega)\right)\quad\textrm{by (\ref{KNH})}\\
&=&K_\mathfrak{m}\quad\textrm{by (\ref{xchp}) and Proposition \ref{CM}}.
\end{eqnarray*}
\end{enumerate}
\end{proof}

\section {Generation of ray class fields by $x$-coordinates of elliptic curves}

By using some interesting inequalities
on special values of Siegel functions,
we shall find a concrete bound of $n$ in Proposition \ref{x-generator}
for which if $n$ is greater than or equal to this, then
${x^{}_{K,\,n}}(\omega)$ generates $K_\mathfrak{m}$ over $K$.

\begin{lemma}\label{ginequality}
Let $\mathbf{v}\in M_{1,\,2}(\mathbb{Q})\setminus M_{1,\,2}(\mathbb{Z})$, and 
let $\tau\in\mathbb{H}$ such that $|q_\tau|=|e^{2\pi\mathrm{i}\tau}|\leq e^{-\pi\sqrt{3}}$.
\begin{enumerate}
\item[\textup{(i)}] We have
\begin{equation*}
\left|g_\mathbf{v}(\tau)\right|~<~
2.29\,|q_\tau|^{-\frac{1}{24}}.
\end{equation*}
\item[\textup{(ii)}] If $\mathbf{v}\in\frac{1}{N}M_{1,\,2}(\mathbb{Z})\setminus M_{1,\,2}(\mathbb{Z})$
for an integer $N\geq2$, then we obtain
\begin{equation*}
\left|g_\mathbf{v}(\tau)\right|~>~\frac{0.76|q_\tau|^{\frac{1}{12}}}{N}.
\end{equation*}
\end{enumerate}
\end{lemma}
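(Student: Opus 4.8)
The plan is to read off the estimates directly from the product expansion (\ref{productexpansion}). Writing $q_z=e^{2\pi\mathrm{i}z}$ with $z=v_1\tau+v_2$, one has $|q_z|=|q_\tau|^{v_1}$, while $|{-}e^{\pi\mathrm{i}v_2(v_1-1)}|=1$ because $v_1,v_2$ are real. Taking absolute values gives
\[
|g_\mathbf{v}(\tau)|
= |q_\tau|^{\frac{1}{2}(v_1^2-v_1+\frac{1}{6})}\,
|1-q_z|\prod_{n=1}^{\infty}|1-q_\tau^{n}q_z|\,|1-q_\tau^{n}q_z^{-1}|.
\]
First I would reduce to $0\le v_1\le\frac{1}{2}$: inspecting the product shows that $|g_\mathbf{v}(\tau)|$ is unchanged when $\mathbf{v}$ is replaced by $\mathbf{v}+\mathbf{a}$ with $\mathbf{a}\in M_{1,2}(\mathbb{Z})$ or by $-\mathbf{v}$, and since both operations preserve $\frac{1}{N}M_{1,2}(\mathbb{Z})$ this normalization is legitimate in each part. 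On $[0,\frac{1}{2}]$ the quantity $v_1^2-v_1+\frac{1}{6}$ ranges over $[-\frac{1}{12},\frac{1}{6}]$, so, because $|q_\tau|<1$, the prefactor obeys $|q_\tau|^{1/12}\le|q_\tau|^{\frac{1}{2}(v_1^2-v_1+\frac{1}{6})}\le|q_\tau|^{-1/24}$, which already produces the powers $|q_\tau|^{-1/24}$ and $|q_\tau|^{1/12}$ appearing in (i) and (ii).

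For (i) I would bound each factor by $|1-w|\le 1+|w|$. With $0\le v_1\le\frac{1}{2}$ we have $|q_z|\le1$ and $|q_\tau^{n}q_z^{-1}|=|q_\tau|^{n-v_1}\le|q_\tau|^{n-1/2}$, so the whole product is dominated by $2\prod_{n\ge1}(1+|q_\tau|^{n})(1+|q_\tau|^{n-1/2})$. Since $|q_\tau|\le e^{-\pi\sqrt{3}}$ is small, the sum $\sum_n(|q_\tau|^{n}+|q_\tau|^{n-1/2})$ is tiny and the product stays below a constant close to $1$; combined with the prefactor bound $|q_\tau|^{-1/24}$ this yields $|g_\mathbf{v}(\tau)|<2.29\,|q_\tau|^{-1/24}$, with room to spare.

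The substantive part is (ii), and the main obstacle is bounding $|1-q_z|$ from below, since the argument of $q_z$ is governed by $v_1\mathrm{Re}(\tau)+v_2$ and is uncontrollable; one is therefore forced to argue by modulus except in a degenerate case. For the tail I would use $|1-w|\ge 1-|w|$, giving a product bounded below by $\prod_{n\ge1}(1-|q_\tau|^{n})(1-|q_\tau|^{n-1/2})$, which exceeds a constant near $1$ thanks to $|q_\tau|\le e^{-\pi\sqrt{3}}$. For the factor $|1-q_z|$ I would split into two cases according to the reduced $v_1$. If $v_1=0$, then $v_2\in\frac{1}{N}\mathbb{Z}\setminus\mathbb{Z}$ and $|1-q_z|=2|\sin\pi v_2|\ge 2\sin(\pi/N)\ge 4/N$. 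If instead $v_1\ge\frac{1}{N}$, then $|1-q_z|\ge 1-|q_z|=1-|q_\tau|^{v_1}\ge 1-|q_\tau|^{1/N}$, and I would check that $N\bigl(1-|q_\tau|^{1/N}\bigr)\ge N\bigl(1-e^{-\pi\sqrt{3}/N}\bigr)$ is bounded below over $N\ge2$ (the minimum occurring at $N=2$) by a constant large enough that, after multiplying by the prefactor $|q_\tau|^{1/12}$ and the tail constant, the total exceeds $0.76\,|q_\tau|^{1/12}/N$.

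It is worth emphasizing where the hypotheses enter. The bound $|q_\tau|\le e^{-\pi\sqrt{3}}$ is exactly what keeps all the infinite products within a controlled distance of $1$, so that the explicit constants $2.29$ and $0.76$ survive; this threshold is precisely $|q_{\tau^{}_{K}}|$ when $\mathrm{Im}(\tau^{}_{K})\ge\frac{\sqrt{3}}{2}$. The assumption $\mathbf{v}\in\frac{1}{N}M_{1,2}(\mathbb{Z})$ in (ii) is indispensable: it forces $|\sin\pi v_2|\ge\sin(\pi/N)$ in the degenerate case and $v_1\ge 1/N$ otherwise, and these are exactly the facts that generate the $1/N$ factor in the lower bound. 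No such hypothesis is needed in (i), consistent with the upper bound being insensitive to how close $q_z$ comes to $1$.
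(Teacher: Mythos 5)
Your proposal is correct and takes essentially the same route as the paper's proof: both reduce to $0\le v_1\le\tfrac12$ via the invariance of $|g_\mathbf{v}|$, bound the exponent $\tfrac12(v_1^2-v_1+\tfrac16)$ between $-\tfrac1{12}$ and $\tfrac16$, estimate the infinite product termwise, and in (ii) handle $|1-q_z|$ by exactly the paper's dichotomy $\min\{|1-e^{2\pi\mathrm{i}/N}|,\,1-|q_\tau|^{1/N}\}$ (your cases $v_1=0$ and $v_1\ge\tfrac1N$). The only differences are in how the explicit constants are extracted --- you use concavity of $\sin(\pi x)$ and monotonicity of $N(1-e^{-\pi\sqrt{3}/N})$, while the paper uses $\sin(\pi x)>x$ and $1-e^{-\pi\sqrt{3}x}>x$ on $(0,\tfrac12]$ together with $1\pm x$ versus $e^{\pm 2x}$ bounds --- and both comfortably clear the thresholds $2.29$ and $0.76$.
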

\begin{proof}
Let $\mathbf{v}=\begin{bmatrix}v_1&v_2\end{bmatrix}$
and $z=v_1\tau+v_2$. By Proposition \ref{Siegelfunction} (i)
we may assume that
$0\leq v_1\leq\frac{1}{2}$. Set $s=|q_\tau|$.
\begin{enumerate}
\item[(i)] We then derive that
\begin{eqnarray*}
|g_\mathbf{v}(\tau)|&\leq&
|q_\tau|^{\frac{1}{2}(v_1^2-v_1+\frac{1}{6})}
(1+|q_z|)\prod_{n=1}^\infty(1+|q_\tau|^n|q_z|)(1+|q_\tau|^n|q_z|^{-1})
\quad\textrm{by (\ref{productexpansion})}\\
&=&s^{\frac{1}{2}(v_1^2-v_1+\frac{1}{6})}(1+s^{v_1})\prod_{n=1}^\infty
(1+s^{n+v_1})(1+s^{n-v_1})\\
&\leq&s^{-\frac{1}{24}}(1+1)\prod_{n=1}^\infty(1+s^{n-\frac{1}{2}})^2
\quad\textrm{since}~0\leq v_1\leq\frac{1}{2}~\textrm{and}~v_1^2-v_1+\frac{1}{6}\geq-\frac{1}{12}\\
&\leq&2s^{-\frac{1}{24}}\prod_{n=1}^\infty e^{2(e^{-\pi\sqrt{3}})^{n-\frac{1}{2}}}
\quad
\textrm{because}~1+x<e^x~\textrm{for}~x>0\\
&=&2s^{-\frac{1}{24}}e^{2\sum_{n=1}^\infty(e^{-\pi\sqrt{3}})^{n-\frac{1}{2}}}\\
&=&2s^{-\frac{1}{24}}e^{\frac{2e^{-\frac{\pi\sqrt{3}}{2}}}{1-e^{-\pi\sqrt{3}}}}\\
&<&2.29\,s^{-\frac{1}{24}}.
\end{eqnarray*}
\item[(ii)] Furthermore, we see that
\begin{eqnarray*}
|g_\mathbf{v}(\tau)|&\geq&
s^{\frac{1}{2}(v_1^2-v_1+\frac{1}{6})}
|1-q_z|\prod_{n=1}^\infty(1-s^{n+v_1})(1-s^{n-v_1})
\quad\textrm{by (\ref{productexpansion})}\\
&\geq&
s^{\frac{1}{12}}
\min\left\{|1-e^\frac{2\pi\mathrm{i}}{N}|,\,1-s^\frac{1}{N}\right\}\prod_{n=1}^\infty(1-s^{n-\frac{1}{2}})^2
\quad\textrm{because}~v_1^2-v_1+\frac{1}{6}\leq\frac{1}{6}\\
&\geq&s^{\frac{1}{12}}\min\left\{\sin\frac{\pi}{N},\,
1-(e^{-\pi\sqrt{3}})^\frac{1}{N}\right\}
\prod_{n=1}^\infty e^{-4(e^{-\pi\sqrt{3}})^{n-\frac{1}{2}}}\\
&&\hspace{4.5cm}\textrm{since}~
1-x>e^{-2x}~\textrm{for}~0<x<\frac{1}{2}\\
&>&s^{\frac{1}{12}}\frac{1}{N}e^{-4\sum_{n=1}^\infty
(e^{-\pi\sqrt{3}})^{n-\frac{1}{2}}}\quad\textrm{because both}~
\sin(\pi x)~\textrm{and}~\,1-e^{-\pi\sqrt{3}x}~\textrm{are}>x\\
&&\hspace{4.5cm} \textrm{for}~0<x\leq\frac{1}{2}\\
&=&s^\frac{1}{12}\frac{1}{N}e^{\frac{-4e^{-\frac{\pi\sqrt{3}}{2}}}{1-e^{-\pi\sqrt{3}}}}\\
&>&\frac{0.76s^{\frac{1}{12}}}{N}.
\end{eqnarray*}
\end{enumerate}
\end{proof}

\begin{theorem}\label{conditional}
Let $K$ be an imaginary quadratic field other than $\mathbb{Q}(\sqrt{-1})$
and $\mathbb{Q}(\sqrt{-3})$.
Let $\mathfrak{m}$ be a proper nontrivial ideal of $\mathcal{O}_K$
in which $N_\mathfrak{m}$ \textup{($\geq2$)} is the least positive integer.
Let $\omega$ be an element of $K$
such that $\omega+\mathcal{O}_K$ is a generator of the
$\mathcal{O}_K$-module $\mathfrak{m}^{-1}/\mathcal{O}_K$.
If $K_\mathfrak{m}$ properly contains $H_K$, then
\begin{equation*}
K_\mathfrak{m}=K({x^{}_{K,\,n}}(\omega))
\end{equation*}
for every nonnegative integer $n$ satisfying
\begin{equation}\label{lowerbound}
n\geq\frac{\frac{13}{24}\pi\sqrt{|d_K|}+
6\ln\left(\frac{229}{76}N_\mathfrak{m}\right)}{\frac{5}{2}\pi\sqrt{|d_K|}-\ln 877383}-\frac{1}{6}.
\end{equation}
\end{theorem}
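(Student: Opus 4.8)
The plan is to turn the qualitative choice of $m$ in the proof of Proposition \ref{x-generator} into the explicit bound (\ref{lowerbound}). If $h_K=1$ then $H_K=K$ and Case 1 of that proof already gives $K_\mathfrak{m}=K(x_{K,n}(\omega))$ for every $n\geq0$, so I may assume $h_K\geq2$; then $d_K=-15$ or $d_K\leq-20$, and Lemma \ref{j-inequalityLemma} together with Remark \ref{-15case} is available. Writing $\mathfrak{h}=\mathfrak{h}_K(\varphi_K([\omega]))$, $d=[K_\mathfrak{m}:H_K]$ and $\mathrm{Gal}(H_K/K)=\{\sigma_1=\mathrm{id},\dots,\sigma_{h_K}\}$ with $J(\tau_K)^{\sigma_i}=J(\tau_{Q_i})$, the argument of Proposition \ref{x-generator} shows that it suffices to prove, for each $i\geq2$,
\begin{equation*}
\left|\frac{C_K^{\sigma_i}}{C_K}\right|^{nd}\left|\frac{\mathrm{N}_{K_\mathfrak{m}/H_K}(\mathfrak{h})^{\sigma_i}}{\mathrm{N}_{K_\mathfrak{m}/H_K}(\mathfrak{h})}\right|<1,
\end{equation*}
since this forces $K(\mathrm{N}_{K_\mathfrak{m}/H_K}(x_{K,n}(\omega)))=H_K$ and then (\ref{xchp}) and Proposition \ref{CM} give the claim. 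I would bound the two factors separately and take logarithms.

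The first factor is immediate: since $\mathrm{Im}(\tau_K)=\tfrac12\sqrt{|d_K|}$ we have $|q_{\tau_K}|=e^{-\pi\sqrt{|d_K|}}$, so Lemma \ref{j-inequalityLemma} and Remark \ref{-15case} give
\begin{equation*}
\left|\frac{C_K^{\sigma_i}}{C_K}\right|=\left|\frac{J(\tau_{Q_i})^2(J(\tau_{Q_i})-1)^3}{J(\tau_K)^2(J(\tau_K)-1)^3}\right|<877383\,e^{-\frac{5}{2}\pi\sqrt{|d_K|}},
\end{equation*}
which is exactly what yields the denominator $\tfrac{5}{2}\pi\sqrt{|d_K|}-\ln877383$ of (\ref{lowerbound}) (positive because $|d_K|$ is large enough).

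The whole work lies in the second factor. By (\ref{xchp}) and the definition of the Fricke function, $\mathfrak{h}=-2^{-7}3^{-5}f_\mathbf{v}(\tau_K)$ with $\omega=v_1\tau_K+v_2$; by Proposition \ref{CM}(ii) its $\mathrm{Gal}(K_\mathfrak{m}/H_K)$-conjugates are the $d$ values $-2^{-7}3^{-5}f_{\mathbf{v}_j}(\tau_K)$, and by Propositions \ref{CG} and \ref{Fricke} the automorphism $\sigma_i$ sends $f_{\mathbf{v}_j}(\tau_K)$ to $f_{\mathbf{v}_j\gamma_i}(\tau_{Q_i})$ with $\gamma_i=\left[\begin{smallmatrix}a_{Q_i}&(b_{Q_i}-b_K)/2\\0&1\end{smallmatrix}\right]$. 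Thus the norm ratio is the product over $j$ of $|f_{\mathbf{v}_j\gamma_i}(\tau_{Q_i})|/|f_{\mathbf{v}_j}(\tau_K)|$, and it is enough to bound a single such quotient: the exponent $d$ then cancels, which is why (\ref{lowerbound}) is independent of $[K_\mathfrak{m}:H_K]$. To estimate $|f_\mathbf{v}(\tau)|$ through Siegel functions I would use that the three two-torsion Fricke values $f_{\mathbf{b}_1},f_{\mathbf{b}_2},f_{\mathbf{b}_3}$ (for $\mathbf{b}_1=\begin{bmatrix}1/2&0\end{bmatrix}$, $\mathbf{b}_2=\begin{bmatrix}0&1/2\end{bmatrix}$, $\mathbf{b}_3=\begin{bmatrix}1/2&1/2\end{bmatrix}$) sum to zero, so that $f_\mathbf{v}=\tfrac13\sum_{k=1}^3(f_\mathbf{v}-f_{\mathbf{b}_k})$, and rewrite each difference by Lemma \ref{ffgg} as
\begin{equation*}
(f_\mathbf{v}-f_{\mathbf{b}_k})^6=\frac{J^2(J-1)^3}{3^9}\,\frac{g_{\mathbf{v}+\mathbf{b}_k}^6\,g_{\mathbf{v}-\mathbf{b}_k}^6}{g_\mathbf{v}^{12}\,g_{\mathbf{b}_k}^{12}}.
\end{equation*}
Feeding the bounds $|g|<2.29|q_\tau|^{-1/24}$ and $|g|>0.76\,N_\mathfrak{m}^{-1}|q_\tau|^{1/12}$ of Lemma \ref{ginequality} into this (legitimate since every reduced $\tau_Q$ has $|q_{\tau_Q}|\leq e^{-\pi\sqrt{3}}$, as $a_Q\leq\sqrt{|d_K|/3}$) produces a two-sided estimate for $|f_\mathbf{v}(\tau)|$ in which the singular value $|C(\tau)|^{1/6}=|J(\tau)^2(J(\tau)-1)^3|^{1/6}$ is displayed explicitly and the ratio $2.29/0.76=229/76$ together with the level $N_\mathfrak{m}$ appears. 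The explicit $|C(\tau)|^{1/6}$ contributes a factor $|C_K^{\sigma_i}/C_K|^{1/6}$ to the quotient, which combines with $|C_K^{\sigma_i}/C_K|^{nd}$ to produce the exponent $n+\tfrac16$ — the source of the $-\tfrac16$ in (\ref{lowerbound}).

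The main obstacle is the \emph{lower} bound for $|f_\mathbf{v}(\tau_K)|$ (needed for the denominator of the quotient). The upper bound at $\tau_{Q_i}$ follows at once from $f_\mathbf{v}=\tfrac13\sum_k(f_\mathbf{v}-f_{\mathbf{b}_k})$ and the triangle inequality, bounding the numerator $g$'s above and the denominator $g$'s below; the worst case $a_{Q_i}=2$, i.e. $\mathrm{Im}(\tau_{Q_i})=\tfrac14\sqrt{|d_K|}$, turns the resulting $|q_{\tau_{Q_i}}|^{-5/12}$ into $e^{\frac{5}{24}\pi\sqrt{|d_K|}}$ uniformly in $i$. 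For the lower bound, however, the same three-term decomposition is useless, since the differences are all of order $|q_{\tau_K}|^{-1}$ and could in principle cancel; here I would instead lower-bound a single difference $|f_\mathbf{v}-f_{\mathbf{b}_k}|$ by reversing the roles of Lemma \ref{ginequality}(i),(ii) — giving $\geq\mathrm{const}\cdot|C_K|^{1/6}|q_{\tau_K}|^{1/3}$, whence the factor $e^{\frac{1}{3}\pi\sqrt{|d_K|}}$ — and then control the interference of the remaining two-torsion terms so as to transfer this estimate to $|f_\mathbf{v}(\tau_K)|$ itself; this transfer is the delicate point. Combining $\tfrac{5}{24}+\tfrac13=\tfrac{13}{24}$ and collecting the constants into $(\tfrac{229}{76}N_\mathfrak{m})^6$ gives a per-conjugate bound $e^{\frac{13}{24}\pi\sqrt{|d_K|}}(\tfrac{229}{76}N_\mathfrak{m})^6$, which is precisely the numerator of (\ref{lowerbound}); substituting everything into the displayed inequality and solving for $n$ then yields (\ref{lowerbound}).
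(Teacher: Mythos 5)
Your skeleton (make the choice of $m$ in Proposition \ref{x-generator} explicit by two\-/sided bounds on Fricke values) is a reasonable first idea, but the proof has a genuine gap exactly at the point you yourself flag as ``the delicate point'', and that gap is not a technicality --- it is the entire difficulty of the theorem. Your norm argument needs an explicit \emph{lower} bound for $|f_{\mathbf{v}_j}(\tau_K)|$, uniform in terms of $d_K$ and $N_\mathfrak{m}$, to feed into the denominator of the quotient $|f_{\mathbf{v}_j\gamma_i}(\tau_{Q_i})|/|f_{\mathbf{v}_j}(\tau_K)|$. The decomposition $f_\mathbf{v}=\tfrac13\sum_k(f_\mathbf{v}-f_{\mathbf{b}_k})$ cannot supply one: since $\sum_k f_{\mathbf{b}_k}=0$, the sum of the three differences \emph{is} $3f_\mathbf{v}$, so near\-/vanishing of $f_\mathbf{v}(\tau_K)$ is precisely near\-/cancellation of the three terms, and lower\-/bounding a single difference says nothing about their sum. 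Lemmas \ref{ffgg} and \ref{ginequality} give two\-/sided control only of \emph{differences} of Fricke values (because Siegel functions have no zeros on $\mathbb{H}$); an individual Fricke value inherits no such bound --- it vanishes when the argument hits a zero of $\wp$, and while the hypothesis $K_\mathfrak{m}\supsetneq H_K$ guarantees $f_\mathbf{v}(\tau_K)\neq0$, nothing you cite bounds it away from $0$ effectively. This is exactly why Proposition \ref{x-generator} only yields ``sufficiently large $n$'': for fixed $K,\mathfrak{m}$ the norm ratio is some fixed nonzero constant, but an explicit threshold (\ref{lowerbound}) requires an explicit bound, which your proposal does not provide. (Two smaller issues: Proposition \ref{CG} applies to forms in $\mathcal{Q}_{N_\mathfrak{m}}(d_K)$, and a reduced form $Q_i$ need not satisfy $\gcd(a_{Q_i},N_\mathfrak{m})=1$, so one must pass to $P=Q_i^\gamma$ and carry $\gamma$ through; and your constant bookkeeping would pick up extra factors from the $2$-torsion Siegel values $g_{\mathbf{b}_k}$, so the numerator of (\ref{lowerbound}) would not come out exactly as claimed.)

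The paper's proof is structured precisely so that no lower bound on any individual Fricke value is ever needed. It argues by contradiction with \emph{two} Galois elements: a nonidentity $\rho\in\mathrm{Gal}(K_\mathfrak{m}/H_K)$ and a hypothetical nonidentity $\sigma\in\mathrm{Gal}(K_\mathfrak{m}/K({x^{}_{K,\,n}}(\omega)))$. Since $K({x^{}_{K,\,n}}(\omega))/K$ is abelian, hence Galois, it contains ${x^{}_{K,\,n}}(\omega)^\rho$, so $\sigma$ fixes the difference ${x^{}_{K,\,n}}(\omega)-{x^{}_{K,\,n}}(\omega)^\rho=C_K^n\bigl(f_\mathbf{u}(\tau^{}_{K})-f_\mathbf{v}(\tau^{}_{K})\bigr)$ with $\mathbf{u}\not\equiv\pm\mathbf{v}$. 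Every quantity requiring estimation --- this difference and its $\sigma$-image --- is then a difference of two distinct Fricke values, which Lemma \ref{ffgg} converts into $J^2(J-1)^3/3^9$ times a quotient of nonvanishing Siegel functions, bounded on both sides by Lemma \ref{ginequality}. Moreover, $\sigma\neq\mathrm{id}$ forces its restriction to $H_K$ to be nontrivial (because $K_\mathfrak{m}=H_K({x^{}_{K,\,n}}(\omega))$), which produces a nonprincipal reduced form $Q$ with $a_Q\geq2$ and hence the decaying factor $\bigl(877383\,|q_{\tau^{}_{K}}|^{5/2}\bigr)^{n+\frac16}$ from Lemma \ref{j-inequalityLemma}; taking logarithms contradicts (\ref{lowerbound}). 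This ``difference trick'' is the idea missing from your proposal, and it is what makes an explicit bound on $n$ provable at all.
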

\begin{proof}
Since $N_\mathfrak{m}\mathcal{O}_K\subseteq\mathfrak{m}$
and $\omega\in\mathfrak{m}^{-1}\setminus\mathcal{O}_K$, we have
\begin{equation}\label{Nab}
N_\mathfrak{m}\omega=a\tau^{}_{K}+b\quad\textrm{for some}~a,\,b\in\mathbb{Z}~
\textrm{such that}~\begin{bmatrix}a&b\end{bmatrix}
\not\in N_\mathfrak{m}M_{1,\,2}(\mathbb{Z}).
\end{equation}
Let $n$ be a nonnegative integer satisfying (\ref{lowerbound}).
If $h_K=1$, then the assertion holds by the proof (Case 1) of
Proposition \ref{x-generator}.
\par
Now, we assume $h_K\geq2$.
Since $K_\mathfrak{m}$ properly
contains $H_K$,
one can take a nonidentity element $\rho$
of $\mathrm{Gal}(K_\mathfrak{m}/H_K)$.
Note that $\rho$ does not fix ${x^{}_{K,\,n}}(\omega)$
due to the fact $K_\mathfrak{m}=H_K({x^{}_{K,\,n}}(\omega))$ by
(\ref{xchp}) and
Proposition \ref{CM} (ii).
Suppose on the contrary that
${x^{}_{K,\,n}}(\omega)$ does not generate $K_\mathfrak{m}$ over $K$.
Then there exists at least one nonidentity element $\sigma$ in
$\mathrm{Gal}(K_\mathfrak{m}/K({x^{}_{K,\,n}}(\omega)))
=\mathrm{Gal}(H_K({x^{}_{K,\,n}}(\omega))/
K({x^{}_{K,\,n}}(\omega)))$.
Let $P$ be a quadratic form in $\mathcal{Q}_{N_\mathfrak{m}}(d_K)$
such that $[P]_{N_\mathfrak{m}}$ maps to $\sigma$ through the surjection
\begin{equation*}
\mathrm{C}_{N_\mathfrak{m}}(d_K)
\stackrel{\sim}{\longrightarrow}\mathrm{Gal}(K_{(N_\mathfrak{m})}/K)
\stackrel{\textrm{restriction}}{\longrightarrow}\mathrm{Gal}(K_\mathfrak{m}/K)
\end{equation*}
whose first map is the isomorphism
given in Proposition \ref{CG}. 
It follows from (\ref{C1}), Proposition \ref{CG} and Remark \ref{compatible} that
\begin{equation*}
P=Q^\gamma\quad\textrm{for some
nonpricipal
reduced form $Q$ and}~
\gamma\in\mathrm{SL}_2(\mathbb{Z}).
\end{equation*}
Here we observe that 
\begin{equation}\label{PQ}
\tau^{}_P=\gamma^{-1}({\tau^{}_{Q}})\quad\textrm{and}\quad a_Q\geq2. 
\end{equation}
Then we deduce that
\begin{eqnarray*}
1&=&\left|
\frac{\left({x^{}_{K,\,n}}(\omega)-{x^{}_{K,\,n}}(\omega)^\rho\right)^\sigma}
{{x^{}_{K,\,n}}(\omega)-{x^{}_{K,\,n}}(\omega)^\rho}
\right|\\
&&\hspace{1cm}\textrm{because $\sigma$ is the identity on
$K({x^{}_{K,\,n}}(\omega))$ which contains}~{x^{}_{K,\,n}}(\omega)^\rho\\
&=&\left|
\frac{\left(C_K^nf_\mathbf{u}(\tau^{}_K)-(C_K^nf_\mathbf{u}(\tau^{}_K))^\rho\right)^\sigma}
{C_K^nf_\mathbf{u}(\tau^{}_K)-(C_K^nf_\mathbf{u}(\tau^{}_K))^\rho}
\right|\quad\textrm{with}~
\mathbf{u}=\begin{bmatrix}
\frac{a}{N_\mathfrak{m}} & \frac{b}{N_\mathfrak{m}}\end{bmatrix}\\
&&\hspace{1cm}\textrm{by (\ref{xchp}), (\ref{Nab}) and the definitions of
$\mathfrak{h}^{}_{K}$, $\varphi^{}_{K}$ and a Fricke function}\\
&=&\left|
\frac{\{J(\tau^{}_{K})^{2n}(J(\tau^{}_{K})-1)^{3n}
(f_\mathbf{u}(\tau^{}_{K})-
f_\mathbf{v}(\tau^{}_{K}))\}^\sigma}
{J(\tau^{}_{K})^{2n}(J(\tau^{}_{K})-1)^{3n}
(f_\mathbf{u}(\tau^{}_{K})-
f_\mathbf{v}(\tau^{}_{K}))}\right|\\
&&\hspace{1cm}\textrm{for some}~\mathbf{v}\in\frac{1}{N_\mathfrak{m}}
M_{1,\,2}(\mathbb{Z})\setminus M_{1,\,2}(\mathbb{Z})~
\textrm{such that}~\mathbf{u}\not\equiv\mathbf{v}~\textrm{and}\,-\mathbf{v}
\Mod{M_{1,\,2}(\mathbb{Z})}\\
&&\hspace{1cm}
\textrm{by Proposition \ref{CM} (ii) and (\ref{fequal})
since}~\rho\in\mathrm{Gal}(K_\mathfrak{m}/H_K)
\setminus\{\mathrm{id}_{K_\mathfrak{m}}\}\\
&=&\left|\frac{J(\tau^{}_P)^2(J(\tau^{}_P)-1)^3}{J(\tau^{}_{K})^2(J(\tau^{}_{K})-1)^3}
\right|^n
\left|
\frac{f_{\mathbf{u}'}(\tau^{}_P)-
f_{\mathbf{v}'}(\tau^{}_P)}
{f_\mathbf{u}(\tau^{}_{K})-
f_\mathbf{v}(\tau^{}_{K})}
\right|\\
&&\hspace{1cm}\textrm{for some}~\mathbf{u}',\,\mathbf{v}'\in
\frac{1}{N_\mathfrak{m}}M_{1,\,2}(\mathbb{Z})\setminus M_{1,\,2}(\mathbb{Z})~
\textrm{such that}~\mathbf{u}'\not\equiv\mathbf{v}'~\textrm{and}\,\,-\mathbf{v}'\Mod{M_{1,\,2}(\mathbb{Z})}\\
&&\hspace{1cm}\textrm{by Proposition \ref{CG}}\\
&=&\left|\frac{J(\gamma^{-1}({\tau^{}_{Q}}))^2(J(\gamma^{-1}({\tau^{}_{Q}}))-1)^3}{J(\tau^{}_{K})^2(J(\tau^{}_{K})-1)^3}
\right|^n
\left|
\frac{f_{\mathbf{u}'}(\gamma^{-1}({\tau^{}_{Q}}))-
f_{\mathbf{v}'}(\gamma^{-1}({\tau^{}_{Q}}))}
{f_\mathbf{u}(\tau^{}_{K})-
f_\mathbf{v}(\tau^{}_{K})}
\right|\quad\textrm{by (\ref{PQ})}\\
&=&\left|\frac{J({\tau^{}_{Q}})^2(J({\tau^{}_{Q}})-1)^3}{J(\tau^{}_{K})^2(J(\tau^{}_{K})-1)^3}
\right|^n
\left|
\frac{f_{\mathbf{u}''}({\tau^{}_{Q}})-
f_{\mathbf{v}''}({\tau^{}_{Q}})}
{f_\mathbf{u}(\tau^{}_{K})-
f_\mathbf{v}(\tau^{}_{K})}
\right|\quad\textrm{with}~\mathbf{u}''=\mathbf{u}'\gamma^{-1}~
\textrm{and}~\mathbf{v}''=\mathbf{v}'\gamma^{-1}\\
&&\hspace{1cm}\textrm{by the fact $J\in\mathcal{F}_1$ and Proposition \ref{Fricke}}\\
&=&\left|\frac{J({\tau^{}_{Q}})^2(J({\tau^{}_{Q}})-1)^3}{J(\tau^{}_{K})^2(J(\tau^{}_{K})-1)^3}
\right|^{n+\frac{1}{6}}
\left|
\frac{g_{\mathbf{u}''+\mathbf{v}''}({\tau^{}_{Q}})g_{\mathbf{u}''-\mathbf{v}''}({\tau^{}_{Q}})
g_\mathbf{u}(\tau^{}_{K})^2g_\mathbf{v}(\tau^{}_{K})^2}
{g_{\mathbf{u}+\mathbf{v}}(\tau^{}_{K})g_{\mathbf{u}-\mathbf{v}}(\tau^{}_{K})
g_{\mathbf{u}''}({\tau^{}_{Q}})^2g_{\mathbf{v}''}({\tau^{}_{Q}})^2}
\right|\quad\textrm{by Lemma \ref{ffgg}}\\
&<&\left(877383\,|q_{\tau^{}_{K}}|^\frac{5}{2}\right)^{n+\frac{1}{6}}
\frac{2.29^6|q_{{\tau^{}_{Q}}}|^{-\frac{1}{12}}|q_{\tau^{}_{K}}|^{-\frac{1}{6}}}
{\left(\frac{0.76}{N_\mathfrak{m}}\right)^6|q_{\tau^{}_{K}}|^\frac{1}{6}|q_{{\tau^{}_{Q}}}|^\frac{1}{3}}
\quad\textrm{by Lemmas \ref{j-inequalityLemma}, \ref{ginequality} and Remark \ref{-15case}}\\
&&\hspace{1cm}\textrm{because}~
\left|q_{\tau^{}_{K}}\right|=e^{-\pi\sqrt{|d_K|}}\leq e^{-\pi\sqrt{15}}~\textrm{and}~
\big|q_{\tau^{}_{Q}}\big|=e^{-\frac{\pi\sqrt{|d_K|}}{a_Q}}\leq e^{-\pi\sqrt{3}}~
\textrm{by (\ref{boundaQ})}\\
&=&\left(877383\,|q_{\tau^{}_{K}}|^\frac{5}{2}\right)^{n+\frac{1}{6}}
\left(\frac{229}{76}N_\mathfrak{m}\right)^6|q_{{\tau^{}_{Q}}}|^{-\frac{5}{12}}|q_{\tau^{}_{K}}|^{-\frac{1}{3}}\\
&\leq&\left(877383\,|q_{\tau^{}_{K}}|^\frac{5}{2}\right)^{n+\frac{1}{6}}
\left(\frac{229}{76}N_\mathfrak{m}\right)^6|q_{\tau^{}_{K}}|^{-\frac{5}{24}}|q_{\tau^{}_{K}}|^{-\frac{1}{3}}\\
&&\hspace{1cm}\textrm{since}~\big|q_{\tau^{}_{K}}\big|^{-1}>~1~\textrm{and}~
\big|q_{{\tau^{}_{Q}}}\big|^{-1}
=\left(|q_{\tau^{}_{K}}|^{-1}\right)^\frac{1}{a_Q}\leq\left(|q_{\tau^{}_{K}}|^{-1}\right)^\frac{1}{2}~\textrm{by (\ref{PQ})}\\
&=&\left(877383\,e^{-\frac{5}{2}\pi\sqrt{|d_K|}}\right)^{n+\frac{1}{6}}
\left(\frac{229}{76}N_\mathfrak{m}\right)^6e^{\frac{13}{24}\pi\sqrt{|d_K|}}.
\end{eqnarray*}
Now, by taking logarithm we obtain the inequality
\begin{equation*}
0~<~\left(n+\frac{1}{6}\right)
\left(\ln 877383-\frac{5}{2}\pi\sqrt{|d_K|}\right)
+6\ln\left(\frac{229}{76}N_\mathfrak{m}\right)+\frac{13}{24}\pi\sqrt{|d_K|}
\end{equation*}
with
\begin{equation*}
\ln 877383-\frac{5}{2}\pi\sqrt{|d_K|}~\leq~
\ln 877383-\frac{5}{2}\pi\sqrt{15}~<~0.
\end{equation*}
But this contradicts (\ref{lowerbound}).
Therefore we conclude that $K_\mathfrak{m}=K\left({x^{}_{K,\,n}}(\omega)\right)$.
\end{proof}

\begin{example}
Let $K=\mathbb{Q}(\sqrt{-5})$, and so $d_K=-20$. 
Note that
\begin{equation*}
\left\{
\begin{array}{ll}
\textrm{$2$ is ramified in $K$} & \textrm{since}~2\,|\,d_K,\\
\textrm{$13$ is inert in $K$} & \textrm{due to}~(\frac{d_K}{13})=-1,\\
\textrm{$23$ splits completely in $K$} & \textrm{because}~
(\frac{d_K}{23})=1
\end{array}
\right.
\end{equation*}
(\cite[Proposition 5.16]{Cox}). And, let $\mathfrak{m}
=\mathfrak{p}^{}_1\mathfrak{p}^{}_2
\mathfrak{p}^{}_3$
be the product of three prime ideals
\begin{equation*}
\mathfrak{p}^{}_1=[1+\sqrt{-5},\,2],
\quad\mathfrak{p}^{}_2=[13\sqrt{-5},\,13]
\quad\textrm{and}\quad
\mathfrak{p}^{}_3=[15+\sqrt{-5},\,23]
\end{equation*}
of $\mathcal{O}_K$ satisfying
$2\mathcal{O}_K=\mathfrak{p}_1^2$,
$13\mathcal{O}_K=\mathfrak{p}^{}_2$
and $23\mathcal{O}_K=\mathfrak{p}^{}_3\overline{\mathfrak{p}}^{}_3$. In this case, by checking the degree formula for $[K_{\mathfrak{m}}:H_{K}]$ we see that $K_{\mathfrak{m}}$ properly contains $H_{K}$.
Since 
\begin{equation*}
\mathfrak{m}=\mathfrak{p}^{}_1\mathfrak{p}^{}_2
\mathfrak{p}^{}_3\supset
\mathfrak{p}_1^2\mathfrak{p}^{}_2\mathfrak{p}^{}_3
\overline{\mathfrak{p}}^{}_3=(2\cdot13\cdot23)\mathcal{O}_K,
\end{equation*}
we get $N_\mathfrak{m}=2\cdot13\cdot23=598$, and hence one can estimate
\begin{equation*}
\frac{\frac{13}{24}\pi\sqrt{|d_K|}+
6\ln\left(\frac{229}{76}N_\mathfrak{m}\right)}{\frac{5}{2}\pi\sqrt{|d_K|}-\ln 877383}-\frac{1}{6}
~=~\frac{\frac{13}{24}\pi\sqrt{20}+
6\ln\left(\frac{229}{76}\cdot598\right)}
{\frac{5}{2}\pi\sqrt{20}-\ln 877383}-\frac{1}{6}
~\approx~2.286282.
\end{equation*}
If $\omega$ is an element of $K$
such that $\omega+\mathcal{O}_K$ is a generator of the
$\mathcal{O}_K$-module $\mathfrak{m}^{-1}/\mathcal{O}_K$, then
we obtain by Theorem \ref{conditional} that
\begin{equation*}
K_\mathfrak{m}=K({x^{}_{K,\,n}}(\omega))
\quad\textrm{for all}~n\geq3. 
\end{equation*}
\end{example}

\begin{remark}
At this stage we conjecture that
Theorem \ref{conditional} may hold
for every $n\in\mathbb{Z}_{\geq0}$.
\end{remark}

\section {Ray class fields of special moduli}

Let $K$ be an imaginary quadratic field other than $\mathbb{Q}(\sqrt{-1})$
and $\mathbb{Q}(\sqrt{-3})$,
and let $N$ ($\geq2$) be an integer
whose prime factors are all inert in $K$.
In this last section,
we shall consider the special
case where $\mathfrak{m}=N\mathcal{O}_K$
and show that Theorem \ref{conditional}
is also true for every $n\in\mathbb{Z}_{\geq0}$.

\begin{lemma}\label{modularunit1}
Let $f\in\mathcal{F}_1$. If $f$ has neither a zero nor a pole on $\mathbb{H}$, then it is a
nonzero rational number.
\end{lemma}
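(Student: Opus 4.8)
The plan is to reduce everything to the surjectivity of the elliptic modular function onto $\mathbb{C}$. Since by definition $\mathcal{F}_1=\mathbb{Q}(j)$ and $j$ is transcendental as a function, I would first write $f=P(j)/Q(j)$ for coprime polynomials $P,\,Q\in\mathbb{Q}[X]$ with $Q\neq0$. The only analytic facts I need about $j$ are that it is holomorphic on all of $\mathbb{H}$ (its single pole sits at the cusp $\mathrm{i}\infty$, not in $\mathbb{H}$) and that $j:\mathbb{H}\to\mathbb{C}$ is surjective, which follows from the classical bijection $\mathrm{SL}_2(\mathbb{Z})\backslash\mathbb{H}\to\mathbb{C}$. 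Because $j$ is holomorphic on $\mathbb{H}$, the poles of $f$ on $\mathbb{H}$ occur precisely at those $\tau$ with $Q(j(\tau))=0$, and coprimality of $P$ and $Q$ guarantees $P(j(\tau))\neq0$ at such points, so these are genuine poles.

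Next I would run two symmetric elimination steps. To kill the denominator: if $Q$ were nonconstant it would have a root $\alpha\in\mathbb{C}$ by the fundamental theorem of algebra, and surjectivity of $j$ would produce $\tau_0\in\mathbb{H}$ with $j(\tau_0)=\alpha$, forcing a pole of $f$ at $\tau_0$ and contradicting the hypothesis. Hence $Q$ is constant, and after absorbing it we may take $f=P(j)$. To bound the numerator: if $\deg P\geq1$, then $P$ has a complex root $\beta$, and again surjectivity yields $\tau_1\in\mathbb{H}$ with $j(\tau_1)=\beta$, so that $f(\tau_1)=0$ is a zero of $f$ on $\mathbb{H}$, contradicting the hypothesis. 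Thus $\deg P=0$, so $f$ equals a constant $c\in\mathbb{Q}$. Finally $c\neq0$, since otherwise $f\equiv0$ would vanish throughout $\mathbb{H}$. This gives that $f$ is a nonzero rational number.

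I do not expect a genuine obstacle in this argument: it is a clean combination of the fundamental theorem of algebra with the holomorphy and surjectivity of $j$ on $\mathbb{H}$. The two points deserving care are purely bookkeeping, namely maintaining coprimality of $P$ and $Q$ so that a zero of the denominator really is a pole of $f$ (rather than being cancelled), and recording that the resulting constant lies in $\mathbb{Q}$ rather than merely $\mathbb{C}$, which is automatic from $\mathcal{F}_1=\mathbb{Q}(j)$ carrying rational coefficients. If one prefers to avoid invoking surjectivity directly, the same conclusion can be reached by comparing $q$-expansion orders: a nonconstant $f\in\mathbb{Q}(j)$ necessarily attains either a pole or a zero at some point of $\mathbb{H}$, but the surjectivity route is the most economical.
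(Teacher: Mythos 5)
Your proof is correct. Note that the paper itself does not prove this lemma at all: it simply cites \cite[Lemma 2.1]{K-S} and \cite[Theorem 2 in Chapter 5]{Lang87}. What you have written is, in effect, the standard argument underlying those references: since $\mathcal{F}_1=\mathbb{Q}(j)$ and $j$ is a nonconstant (hence transcendental over the constants) holomorphic function on $\mathbb{H}$, writing $f=P(j)/Q(j)$ with $P,Q\in\mathbb{Q}[X]$ coprime, the surjectivity of $j:\mathbb{H}\rightarrow\mathbb{C}$ together with the fundamental theorem of algebra forces both $Q$ and $P$ to be constant, and the hypothesis excludes $f\equiv0$. The two bookkeeping points you flag are exactly the right ones: coprimality guarantees that a root of $Q$ composed with $j$ is a genuine pole (no cancellation), and the constant lands in $\mathbb{Q}$ because the coefficients do. Your argument is complete and self-contained, which is arguably an improvement over the paper's bare citation.
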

\begin{proof}
See \cite[Lemma 2.1]{K-S} and \cite[Theorem 2 in Chapter 5]{Lang87}.
\end{proof}

For an integer $N\geq2$, let
\begin{equation*}
S_N=\left\{
\begin{bmatrix}s&t\end{bmatrix}\in M_{1,\,2}(\mathbb{Z})~|~0\leq s,\,t<N~\textrm{and}~
\gcd(N,\,s,\,t)=1\right\}.
\end{equation*}
We define an equivalence relation $\equiv_N$ on the set $S_N$ as follows\,:
for $\mathbf{u},\,\mathbf{v}\in S_N$
\begin{equation*}
\mathbf{u}\equiv_N\mathbf{v}\quad
\Longleftrightarrow\quad
\mathbf{u}\equiv\mathbf{v}~\textrm{or}~-\mathbf{v}\Mod{NM_{1,\,2}(\mathbb{Z})}.
\end{equation*}
Let
\begin{equation*}
P_N=\left\{(\mathbf{u},\,\mathbf{v})~|~\mathbf{u},\,\mathbf{v}\in S_N~
\textrm{such that}~\mathbf{u}\not\equiv_N\mathbf{v}\right\}
\quad\textrm{and}\quad m_N=|P_N|.
\end{equation*}
Since $\begin{bmatrix}1&0\end{bmatrix},
\,\begin{bmatrix}0&1\end{bmatrix}$ represent distinct classes in $S_N/\equiv_N$, we claim $m_N\geq2$.

\begin{lemma}\label{normconstant}
If $N$ is an integer such that $N\geq2$, then we have
\begin{equation*}
\prod_{(\mathbf{u},\,\mathbf{v})\in P_N}
\left(f_{\frac{1}{N}\mathbf{u}}-f_{\frac{1}{N}\mathbf{v}}\right)^6=
k\left\{J^2(J-1)^3\right\}^{m_N}\quad\textrm{for some}~k\in\mathbb{Q}\setminus\{0\}.
\end{equation*}
\end{lemma}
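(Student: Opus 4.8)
The plan is to substitute the identity of Lemma \ref{ffgg} into each factor of the product, thereby peeling off an explicit power of $J^2(J-1)^3$ together with a product $G$ of Siegel functions, and then to prove that $G$ is a nonzero rational number by showing it lies in $\mathcal{F}_1$ and has no zeros or poles on $\mathbb{H}$. First I would apply Lemma \ref{ffgg} to each pair $(\mathbf{u},\,\mathbf{v})\in P_N$ with arguments $\tfrac{1}{N}\mathbf{u}$ and $\tfrac{1}{N}\mathbf{v}$: the hypothesis $\tfrac{1}{N}\mathbf{u}\not\equiv\pm\tfrac{1}{N}\mathbf{v}\Mod{M_{1,\,2}(\mathbb{Z})}$ is exactly the condition $\mathbf{u}\not\equiv_N\mathbf{v}$ defining $P_N$, while $\tfrac{1}{N}\mathbf{u},\,\tfrac{1}{N}\mathbf{v}\notin M_{1,\,2}(\mathbb{Z})$ follows from the coprimality requirement in $S_N$, which forbids $\mathbf{u}$ or $\mathbf{v}$ from being $\equiv\mathbf{0}\Mod{N}$. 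This gives
\begin{equation*}
F:=\prod_{(\mathbf{u},\,\mathbf{v})\in P_N}\left(f_{\frac{1}{N}\mathbf{u}}-f_{\frac{1}{N}\mathbf{v}}\right)^6=\frac{\{J^2(J-1)^3\}^{m_N}}{3^{9m_N}}\,G,\qquad G:=\prod_{(\mathbf{u},\,\mathbf{v})\in P_N}\frac{g_{\frac{1}{N}(\mathbf{u}+\mathbf{v})}^6\,g_{\frac{1}{N}(\mathbf{u}-\mathbf{v})}^6}{g_{\frac{1}{N}\mathbf{u}}^{12}\,g_{\frac{1}{N}\mathbf{v}}^{12}},
\end{equation*}
so it suffices to show that $G\in\mathbb{Q}\setminus\{0\}$.

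The central step is to prove that $F$ lies in $\mathcal{F}_1$. Since each $f_{\frac{1}{N}\mathbf{w}}$ belongs to $\mathcal{F}_N$, so does $F$, and by the identification $\mathrm{Gal}(\mathcal{F}_N/\mathcal{F}_1)\simeq\mathrm{GL}_2(\mathbb{Z}/N\mathbb{Z})/\langle-I_2\rangle$ it is enough to check that $F$ is fixed by every $\gamma$. By Proposition \ref{Fricke} we have $f_{\frac{1}{N}\mathbf{w}}^\gamma=f_{\frac{1}{N}\mathbf{w}\gamma}$, and since $\gamma$ is invertible it preserves the coprimality condition defining $S_N$ and commutes with negation, hence permutes the classes in $S_N/\equiv_N$; consequently $(\mathbf{u},\,\mathbf{v})\mapsto(\mathbf{u}\gamma,\,\mathbf{v}\gamma)$ is a bijection of $P_N$. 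Because each factor is a sixth power, the sign ambiguity that arises from (\ref{fequal}) when $\mathbf{w}\gamma$ is reduced to its representative in $S_N$ is irrelevant, so applying $\gamma$ merely reindexes the product and leaves $F$ invariant. Thus $F\in\mathcal{F}_1$, and since $J^2(J-1)^3\in\mathcal{F}_1$ is not identically zero, the displayed identity gives $G=3^{9m_N}F/\{J^2(J-1)^3\}^{m_N}\in\mathcal{F}_1$.

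Finally I would invoke Lemma \ref{modularunit1}. The Siegel functions $g_\mathbf{v}$ have neither a zero nor a pole on $\mathbb{H}$, so the finite product $G$ of their integer powers has none either; being an element of $\mathcal{F}_1$ with no zeros or poles on $\mathbb{H}$, $G$ must equal a nonzero rational number $c$. Substituting back yields $F=(c/3^{9m_N})\{J^2(J-1)^3\}^{m_N}$, which is the claimed identity with $k=c/3^{9m_N}\in\mathbb{Q}\setminus\{0\}$. I expect the only delicate point to be the bookkeeping in the Galois-invariance step—verifying that right multiplication by $\gamma$ genuinely permutes $P_N$ and that passing from $\mathbf{w}\gamma$ to its $S_N$-representative (modulo $\pm$) introduces no mismatch once the sixth powers absorb all signs—while the reduction to $G$ and the final rationality argument are formal consequences of Lemmas \ref{ffgg} and \ref{modularunit1}.
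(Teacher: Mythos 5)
Your proposal is correct and follows essentially the same route as the paper: Galois invariance of the product of Fricke differences via Proposition \ref{Fricke}, (\ref{fequal}) and the reindexing bijection of $P_N$, then Lemma \ref{ffgg} to split off $\{J^2(J-1)^3\}^{m_N}$, and finally Lemma \ref{modularunit1} applied to the zero- and pole-free Siegel-function product; the only (harmless) difference is that you apply Lemma \ref{ffgg} before the invariance argument rather than after. One small remark: the sign bookkeeping needs no appeal to sixth powers, since by (\ref{fequal}) the Fricke functions themselves satisfy $f_{\mathbf{w}}=f_{-\mathbf{w}}$, so replacing $\mathbf{w}\gamma$ by its $S_N$-representative already leaves each factor unchanged.
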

\begin{proof}
For $\begin{bmatrix}a&b\end{bmatrix}\in M_{1,\,2}(\mathbb{Z})$
with $\gcd(N,\,a,\,b)=1$, let ${\pi^{}_N}\left(\begin{bmatrix}a&b\end{bmatrix}\right)$
denote the unique element of $S_N$ satisfying
\begin{equation*}
{\pi^{}_N}\left(\begin{bmatrix}a&b\end{bmatrix}\right)
\equiv\begin{bmatrix}a&b\end{bmatrix}\Mod{NM_{1,\,2}(\mathbb{Z})}.
\end{equation*}
Let $\alpha\in M_{2}(\mathbb{Z})$ with
$\gcd(N,\,\det(\alpha))=1$, and let
$\widetilde{\alpha}$ be its
image in $\mathrm{GL}_2(\mathbb{Z}/N\mathbb{Z})/\langle-I_2\rangle$
($\simeq\mathrm{Gal}(\mathcal{F}_N/\mathcal{F}_1)$).
Setting
\begin{equation*}
f=\prod_{(\mathbf{u},\,\mathbf{v})\in P_N}
\left(f_{\frac{1}{N}\mathbf{u}}-f_{\frac{1}{N}\mathbf{v}}\right)^6,
\end{equation*}
we find that
\begin{eqnarray*}
f^{\widetilde{\alpha}}&=&
\prod_{(\mathbf{u},\,\mathbf{v})\in P_N}
\left(f_{\frac{1}{N}\mathbf{u}\widetilde{\alpha}}-f_{\frac{1}{N}\mathbf{v}\widetilde{\alpha}}\right)^6\quad\textrm{by
Proposition \ref{Fricke}}\\
&=&
\prod_{(\mathbf{u},\,\mathbf{v})\in P_N}
\left(f_{\frac{1}{N}{\pi^{}_N}(\mathbf{u}\alpha)}-
f_{\frac{1}{N}{\pi^{}_N}(\mathbf{v}\alpha)}\right)^6\quad\textrm{by
(\ref{fequal})}\\
&=&f
\end{eqnarray*}
because the mapping $S_N\rightarrow S_N$, $\mathbf{u}\mapsto{\pi^{}_N}(\mathbf{u}\alpha)$,
gives rise to an injection (and so, a bijection) of $P_N$ into itself.
This observation implies by Galois theory that $f$ lies in $\mathcal{F}_1$.
\par
On the other hand, we attain
\begin{eqnarray*}
f&=&\prod_{(\mathbf{u},\,\mathbf{v})\in P_N}
\frac{J^2(J-1)^3}{3^9}\frac{g_{\frac{1}{N}(\mathbf{u}+\mathbf{v})}^6
g_{\frac{1}{N}(\mathbf{u}-\mathbf{v})}^6}{g_{\frac{1}{N}\mathbf{u}}^{12}
g_{\frac{1}{N}\mathbf{v}}^{12}}\quad\textrm{by Lemma \ref{ffgg}}\\
&=&g\left\{
\frac{J^2(J-1)^3}{3^9}
\right\}^{m_N}\quad\textrm{with}~g=
\prod_{(\mathbf{u},\,\mathbf{v})\in P_N}
\frac{g_{\frac{1}{N}(\mathbf{u}+\mathbf{v})}^6
g_{\frac{1}{N}(\mathbf{u}-\mathbf{v})}^6}{g_{\frac{1}{N}\mathbf{u}}^{12}
g_{\frac{1}{N}\mathbf{v}}^{12}}.
\end{eqnarray*}
Since $f$ and $J$ belong to $\mathcal{F}_1$,
so does $g$. Moreover, since $g$ has neither a zero nor a pole on $\mathbb{H}$,
it is a nonzero rational number by Lemma \ref{modularunit1}.
Therefore we obtain
\begin{equation*}
f=k\left\{J^2(J-1)^3\right\}^{m_N}\quad
\textrm{for some}~k\in\mathbb{Q}\setminus\{0\}.
\end{equation*}
\end{proof}

\begin{lemma}\label{allst}
Let $K$ be an imaginary quadratic field and
$N$ be an integer with $N\geq2$.
If every prime factor of $N$ is inert in  $K$, then the principal ideal
$(s\tau^{}_{K}+t)\mathcal{O}_K$ is relatively prime to $N\mathcal{O}_K$
for all $s,\,t\in\mathbb{Z}$ such that
$\gcd(N,\,s,\,t)=1$.
\end{lemma}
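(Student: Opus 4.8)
The plan is to establish coprimality one prime at a time. Because every prime factor $p$ of $N$ is inert in $K$, each ideal $p\mathcal{O}_K$ is itself a maximal ideal of $\mathcal{O}_K$ and is the unique prime of $\mathcal{O}_K$ lying over $p$; consequently $N\mathcal{O}_K$ is a product of powers of these prime ideals $p\mathcal{O}_K$ with $p\mid N$. It therefore suffices to show, for each such $p$, that $(s\tau^{}_{K}+t)\mathcal{O}_K$ is relatively prime to $p\mathcal{O}_K$, which, since $p\mathcal{O}_K$ is maximal, amounts to verifying that $s\tau^{}_{K}+t\notin p\mathcal{O}_K$.

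The main step is to pass to the residue ring. Since $\mathcal{O}_K=[\tau^{}_{K},\,1]=\mathbb{Z}[\tau^{}_{K}]$ and $\tau^{}_{K}=\tau^{}_{Q_\mathrm{pr}}$ is a root of the monic integer polynomial $Q_\mathrm{pr}(x,\,1)$, which is its minimal polynomial over $\mathbb{Q}$, reduction modulo $p$ yields
\begin{equation*}
\mathcal{O}_K/p\mathcal{O}_K\cong\mathbb{F}_p[x]/(\overline{Q_\mathrm{pr}(x,\,1)}).
\end{equation*}
To say that $p$ is inert is precisely to say that $p\mathcal{O}_K$ is prime, that is, that this quotient is a field; equivalently, $\overline{Q_\mathrm{pr}(x,\,1)}$ is irreducible over $\mathbb{F}_p$. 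In that case $\mathcal{O}_K/p\mathcal{O}_K\cong\mathbb{F}_{p^2}$, and the images of $1$ and $\tau^{}_{K}$ form an $\mathbb{F}_p$-basis of it.

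The conclusion is then immediate. Reading off this basis, the image of $s\tau^{}_{K}+t$ in $\mathbb{F}_{p^2}$ vanishes if and only if both $p\mid s$ and $p\mid t$. But the hypothesis $\gcd(N,\,s,\,t)=1$ together with $p\mid N$ forbids $p$ from dividing $s$ and $t$ simultaneously, for otherwise $p$ would divide $\gcd(N,\,s,\,t)$. Hence $s\tau^{}_{K}+t\notin p\mathcal{O}_K$ for every prime $p\mid N$, and since $(s\tau^{}_{K}+t)\mathcal{O}_K$ thus shares no prime factor with any $p\mathcal{O}_K$, it is coprime to their product $N\mathcal{O}_K$. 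The only point that demands attention is the middle paragraph: recognizing the inert hypothesis as the irreducibility of $\overline{Q_\mathrm{pr}(x,\,1)}$, which is exactly what makes $\{1,\,\tau^{}_{K}\}$ reduce to a genuine $\mathbb{F}_p$-basis and so rules out an accidental vanishing of $s\tau^{}_{K}+t$ modulo $p$.
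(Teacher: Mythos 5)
Your proof is correct, but it takes a genuinely different route from the paper's. The paper computes the norm explicitly,
\begin{equation*}
\mathrm{N}_{K/\mathbb{Q}}(s\tau_K+t)=c_Ks^2-b_Kst+t^2,
\end{equation*}
and then shows by a case analysis ($d_K\equiv0$ or $1\Mod{4}$, and $p=2$ or $p$ odd) that this integer is relatively prime to $N$, using the Legendre-symbol characterization of inert primes, namely $\left(\frac{d_K}{p}\right)=-1$ for odd $p$ and $d_K\equiv5\Mod{8}$ for $p=2$; coprimality of the ideals then follows from coprimality of the norms. You instead argue structurally, prime by prime: inertness makes $p\mathcal{O}_K$ the unique (maximal) prime above $p$, so coprimality reduces to non-membership $s\tau_K+t\notin p\mathcal{O}_K$, which you read off from the $\mathbb{F}_p$-basis $\{1,\,\overline{\tau}_K\}$ of $\mathcal{O}_K/p\mathcal{O}_K\cong\mathbb{Z}[\tau_K]/(p)$. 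Your argument is shorter and dispenses with both the norm form and the congruence case analysis; the paper's is more elementary in the sense that it uses nothing beyond quadratic residues, and it makes the arithmetic content of inertness completely explicit.

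One small correction to your closing remark, which misplaces where inertness actually does its work. The images of $1$ and $\tau_K$ form an $\mathbb{F}_p$-basis of $\mathcal{O}_K/p\mathcal{O}_K$ for \emph{every} prime $p$, irreducible quadratic or not: reduction modulo $p$ of a $\mathbb{Z}$-basis of $\mathcal{O}_K\cong\mathbb{Z}^2$ is always an $\mathbb{F}_p$-basis of $\mathcal{O}_K/p\mathcal{O}_K\cong(\mathbb{Z}/p\mathbb{Z})^2$. Hence the equivalence ``$s\tau_K+t\in p\mathcal{O}_K$ if and only if $p\,|\,s$ and $p\,|\,t$'' is automatic and needs no irreducibility. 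Inertness is essential rather in your \emph{first} paragraph: for a split or ramified prime $p$, the ideal $(s\tau_K+t)\mathcal{O}_K$ can share a prime factor with $p\mathcal{O}_K$ without $s\tau_K+t$ lying in $p\mathcal{O}_K$ (for instance $2+\mathrm{i}$ and $5$ in $\mathbb{Q}(\sqrt{-1})$), so the reduction of coprimality to non-membership in $p\mathcal{O}_K$ is exactly the step that would fail.
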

\begin{proof}
We see that
\begin{equation*}
\mathrm{N}_{K/\mathbb{Q}}(s\tau^{}_{K}+t)=
(s\tau^{}_{K}+t)(s\overline{\tau}^{}_K+t)=
\tau^{}_{K}\overline{\tau}^{}_Ks^2+(\tau^{}_{K}+\overline{\tau}^{}_K)st+t^2=
c_Ks^2-b_Kst+t^2.
\end{equation*}
Now, we claim that $\mathrm{N}_{K/\mathbb{Q}}(s\tau^{}_{K}+t)$
is relatively prime to $N$. Indeed, we have two cases\,:
$d_K\equiv0\Mod{4}$ or $d_K\equiv1\Mod{4}$.
\begin{enumerate}
\item[Case 1.] Consider the case where $d_K\equiv0\Mod{4}$, and then
$b_K=0$ and $c_K=-\frac{d_K}{4}$.
Let $p$ be a prime factor of $N$. Since $p$ is inert in $K$,
it must be odd and satisfy $\left(\frac{d_K}{p}\right)=-1$.
If
\begin{equation*}
\mathrm{N}_{K/\mathbb{Q}}(s\tau^{}_{K}+t)=-\frac{d_K}{4}s^2+t^2\equiv0\Mod{p},
\end{equation*}
then the fact $\left(\frac{d_K}{p}\right)=-1$ forces us to get
$s\equiv0\Mod{p}$ and so $t\equiv0\Mod{p}$. But
this contradicts the fact $\gcd(N,\,s,\,t)=1$.
Therefore $\mathrm{N}_{K/\mathbb{Q}}(s\tau^{}_{K}+t)$ is relatively prime to $p$,
and hence to $N$.
\item[Case 2.] Let $d_K\equiv1\Mod{4}$, and so $b_K=1$ and
$c_K=\frac{1-d_K}{4}$. And, let $p$ be a prime factor of $N$.
Since $p$ is inert in $K$, we derive
\begin{equation*}
\left\{\begin{array}{ll}
d_K\equiv5\Mod{8} & \textrm{if}~p=2,\\
\left(\frac{d_K}{p}\right)=-1 & \textrm{if}~p>2.
\end{array}\right.
\end{equation*}
Then we find that
\begin{equation*}
\mathrm{N}_{K/\mathbb{Q}}(s\tau^{}_{K}+t)
=\frac{1-d_K}{4}s^2-st+t^2\equiv
\left\{\begin{array}{ll}
s^2+st+t^2\Mod{p} & \textrm{if}~p=2,\\
4'\left\{(s-2t)^2-d_Ks^2\right\}\Mod{p} & \textrm{if}~p>2,
\end{array}\right.
\end{equation*}
where $4'$ is an integer such that $4\cdot 4'\equiv1\Mod{p}$.
When $p=2$, we see that $s^2+st+t^2\equiv1\Mod{p}$ because
$s$ and $t$ are not both even.
When $p>2$, if $\mathrm{N}_{K/\mathbb{Q}}(s\tau^{}_{K}+t)\equiv0\Mod{p}$, then
the fact $\left(\frac{d_K}{p}\right)=-1$ yields that
$s\equiv0\Mod{p}$ and so $t\equiv0\Mod{p}$. But
this again contradicts $\gcd(N,\,s,\,t)=1$.
Hence $\mathrm{N}_{K/\mathbb{Q}}(s\tau^{}_{K}+t)$ is relatively prime to $p$,
and so to $N$.
\end{enumerate}
Therefore, the principal ideal
$(s\tau^{}_{K}+t)\mathcal{O}_K$ is relatively prime to $N\mathcal{O}_K$.
\end{proof}

\begin{theorem}\label{inertcase}
Let $K$ be an imaginary quadratic field other than $\mathbb{Q}(\sqrt{-1})$
and $\mathbb{Q}(\sqrt{-3})$, and let $N$ be an integer such that $N\geq2$.
Let $\omega$ be an element of $K$
so that $\omega+\mathcal{O}_K$
is a generator of the $\mathcal{O}_K$-module
$N^{-1}\mathcal{O}_K/\mathcal{O}_K$.
If every prime factor of $N$ is inert in $K$, then
we attain
\begin{equation*}
K_{(N)}=K\left({x^{}_{K,\,n}}(\omega)\right)\quad
\textrm{for every}~n\in\mathbb{Z}_{\geq0}.
\end{equation*}
\end{theorem}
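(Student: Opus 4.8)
The plan is to prove the stronger containment $H_K\subseteq K\left({x^{}_{K,\,n}}(\omega)\right)$ for every $n\in\mathbb{Z}_{\geq0}$; combined with $K_{(N)}=H_K\left({x^{}_{K,\,n}}(\omega)\right)$ from Proposition \ref{CM} (ii), this yields $K_{(N)}=K\left({x^{}_{K,\,n}}(\omega)\right)$ at once. Write $L=K\left({x^{}_{K,\,n}}(\omega)\right)$. Since $L$ is a subfield of the abelian extension $K_{(N)}/K$, it is Galois over $K$, so every $K$-conjugate ${x^{}_{K,\,n}}(\omega)^\tau$ ($\tau\in\mathrm{Gal}(K_{(N)}/K)$) already lies in $L$. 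The idea is to multiply together suitably many differences of such conjugates so that, by the \emph{exact} identity of Lemma \ref{normconstant}, the product collapses to a nonzero rational multiple of a power of $C_K$. As this product lies in $L$, Lemma \ref{HKc} then forces $H_K\subseteq L$. This replaces the size estimates of Theorem \ref{conditional}, which only worked for large $n$, by an algebraic identity valid for all $n$.

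To set this up I would first record the normalization $\mathfrak{h}^{}_{K}(\varphi^{}_{K}([\omega]))=-\tfrac{1}{2^7 3^5}f_\mathbf{u}(\tau^{}_{K})$, where $\mathbf{u}=\tfrac1N\begin{bmatrix}a&b\end{bmatrix}$ and $N\omega=a\tau^{}_{K}+b$, so that ${x^{}_{K,\,n}}(\omega)=-\tfrac{1}{2^7 3^5}C_K^n f_\mathbf{u}(\tau^{}_{K})$. The decisive step is to identify the conjugates of ${x^{}_{K,\,n}}(\omega)$ under $\mathrm{Gal}(K_{(N)}/H_K)$: by Proposition \ref{CM} (ii) they are the values $C_K^n\,\mathfrak{h}^{}_{K}(\varphi^{}_{K}([\xi\omega]))$ as $\xi$ runs through $(\mathcal{O}_K/N\mathcal{O}_K)^\times/\{\pm1\}$ (here $\mathcal{O}_K^\times=\{\pm1\}$, as $K\neq\mathbb{Q}(\sqrt{-1}),\mathbb{Q}(\sqrt{-3})$), hence of the form $C_K^n f_{\frac1N\mathbf{w}}(\tau^{}_{K})$ with $\mathbf{w}\in S_N$. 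The inert hypothesis enters exactly here: Lemma \ref{allst} guarantees that every primitive $s\tau^{}_{K}+t$ is a unit modulo $N$, so the $\{\pm1\}$-equivariant map $s\tau^{}_{K}+t\mapsto\begin{bmatrix}s&t\end{bmatrix}$ identifies $(\mathcal{O}_K/N\mathcal{O}_K)^\times/\{\pm1\}$ with $S_N/\equiv_N$. Via the reciprocity isomorphism $\mathrm{Gal}(K_{(N)}/H_K)\cong(\mathcal{O}_K/N\mathcal{O}_K)^\times/\{\pm1\}$ and the distinctness of conjugates in Proposition \ref{CM} (ii), this gives a bijection $\rho\mapsto\mathbf{u}_\rho$ from $\mathrm{Gal}(K_{(N)}/H_K)$ onto $S_N/\equiv_N$.

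Granting this bijection, let $\rho_\mathbf{w}$ denote the automorphism with $\mathbf{u}_{\rho_\mathbf{w}}=\mathbf{w}$, and form the product of sixth powers of all pairwise differences of conjugates over $H_K$, indexed by $P_N$:
\begin{equation*}
\prod_{(\mathbf{u},\,\mathbf{v})\in P_N}\left({x^{}_{K,\,n}}(\omega)^{\rho_\mathbf{u}}-{x^{}_{K,\,n}}(\omega)^{\rho_\mathbf{v}}\right)^6
=\left(\tfrac{1}{2^7 3^5}\right)^{6m_N}C_K^{6nm_N}\prod_{(\mathbf{u},\,\mathbf{v})\in P_N}\left(f_{\frac1N\mathbf{u}}(\tau^{}_{K})-f_{\frac1N\mathbf{v}}(\tau^{}_{K})\right)^6.
\end{equation*}
Evaluating Lemma \ref{normconstant} at $\tau^{}_{K}$ and using $C_K=J(\tau^{}_{K})^2(J(\tau^{}_{K})-1)^3$ turns the right-hand product into $k\,C_K^{m_N}$ with $k\in\mathbb{Q}\setminus\{0\}$, so the whole expression equals $k'\,C_K^{(6n+1)m_N}$ for some $k'\in\mathbb{Q}\setminus\{0\}$. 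The left-hand side is a product of elements of $L$, so this nonzero quantity lies in $L$; dividing by $k'\in K$ gives $C_K^{(6n+1)m_N}\in L$, and since $(6n+1)m_N$ is a positive integer, Lemma \ref{HKc} yields $H_K=K\bigl(C_K^{(6n+1)m_N}\bigr)\subseteq L$, as required.

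I expect the main obstacle to be the bijectivity step of the second paragraph: pinning down precisely which Fricke vectors $\mathbf{u}_\rho$ arise, and checking that the inert hypothesis, through Lemma \ref{allst}, makes them exhaust $S_N/\equiv_N$ with the correct multiplicity so that the index set matches $P_N$ in Lemma \ref{normconstant}. One should also treat $N=2$ with care (where $-1\equiv1$, so $\{\pm1\}$ acts trivially), verify that each $f_{\frac1N\mathbf{v}}$ is finite at $\tau^{}_{K}$ so that the identity of Lemma \ref{normconstant} may legitimately be specialized there, and confirm $K_{(N)}\supsetneq H_K$ so the statement is non-vacuous; a degree count gives $[K_{(N)}:H_K]=\lvert(\mathcal{O}_K/N\mathcal{O}_K)^\times/\{\pm1\}\rvert\geq3$ in the inert case.
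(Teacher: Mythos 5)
Your proposal is correct and follows essentially the same route as the paper's own proof: both use Lemma \ref{allst} (the inert hypothesis) together with Proposition \ref{CM} (ii) and the Galois property of $K({x^{}_{K,\,n}}(\omega))/K$ to place all the values ${x^{}_{K,\,n}}\bigl(\tfrac{s\tau^{}_{K}+t}{N}\bigr)$, $\begin{bmatrix}s&t\end{bmatrix}\in S_N$, inside $K({x^{}_{K,\,n}}(\omega))$, then collapse the product over $P_N$ via Lemma \ref{normconstant} to a nonzero rational multiple of $C_K^{(6n+1)m_N}$ and conclude by Lemma \ref{HKc}. The only difference is bookkeeping: the paper works directly with the explicit conjugate values rather than setting up your bijection $\rho\mapsto\mathbf{u}_\rho$ with $S_N/\equiv_N$, which is a slightly stronger statement than the argument actually needs.
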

\begin{proof}
Since $K_{(N)}$ is an abelian extension of $K$,
$K({x^{}_{K,\,n}}(\omega))$ is also an
abelian extension of $K$
containing ${x^{}_{K,\,n}}(\frac{1}{N})$
by Proposition \ref{CM} (ii).
And, since
\begin{equation*}
(s\tau^{}_{K}+t)\mathcal{O}_K\in P_K(N\mathcal{O}_K)\quad
\textrm{for all}~\begin{bmatrix}s&t\end{bmatrix}\in S_N
\end{equation*}
by Lemma \ref{allst}, we get by Proposition \ref{CM} (ii) that
\begin{equation*}
{x^{}_{K,\,n}}\left(\frac{s\tau^{}_{K}+t}{N}\right)\in
K({x^{}_{K,\,n}}(\omega))
\quad
\textrm{for all}~\begin{bmatrix}s&t\end{bmatrix}\in S_N.
\end{equation*}
We then deduce that
\begin{eqnarray*}
K({x^{}_{K,\,n}}(\omega))
&\ni&\prod_{(\mathbf{u},\,\mathbf{v})\in P_N}
\left({x^{}_{K,\,n}}\left(\frac{u_1\tau^{}_{K}+u_2}{N}\right)-
{x^{}_{K,\,n}}\left(\frac{v_1\tau^{}_{K}+v_2}{N}\right)\right)^6\\
&&\hspace{5cm}\textrm{with}~\mathbf{u}=\begin{bmatrix}u_1&u_2\end{bmatrix}~
\textrm{and}~\mathbf{v}=\begin{bmatrix}v_1&v_2\end{bmatrix}\\
&=&\prod_{(\mathbf{u},\,\mathbf{v})\in P_N}
\left(C_K^n\mathfrak{h}^{}_{K}\left(\varphi^{}_{K}\left(\left[\frac{u_1\tau^{}_{K}+u_2}{N}\right]\right)
\right)-
C_K^n\mathfrak{h}^{}_{K}\left(\varphi^{}_{K}\left(\left[\frac{v_1\tau^{}_{K}+v_2}{N}\right]\right)
\right)\right)^6\quad\textrm{by (\ref{xchp})}\\
&=&\prod_{(\mathbf{u},\,\mathbf{v})\in P_N}
\left\{
\left(\frac{C_K^n}{2^73^5}\right)^6\left(f_{\frac{1}{N}\mathbf{u}}(\tau^{}_{K})-
f_{\frac{1}{N}\mathbf{v}}(\tau^{}_{K})\right)^6\right\}\\
&&\hspace{5cm}\textrm{by the definitions of $\varphi^{}_{K},\,
\mathfrak{h}^{}_{K}$ and Fricke functions}\\
&=&
\left(
\frac{C_K^n}{2^73^5}\right)^{6m_N}
\left\{\prod_{(\mathbf{u},\,\mathbf{v})\in P_N}
\left(f_{\frac{1}{N}\mathbf{u}}-f_{\frac{1}{N}\mathbf{v}}\right)^6\right\}(\tau^{}_{K})\\
&=&
\left(
\frac{C_K^n}{2^73^5}\right)^{6m_N}
\left[k\left\{J^2(J-1)^3\right\}^{m_N}\right](\tau^{}_{K})
\quad\textrm{for some $k\in\mathbb{Q}\setminus\{0\}$ by Lemma \ref{normconstant}}\\
&=&k\left(\frac{C_K^{6n+1}}{2^{42}3^{30}}\right)^{m_N}.
\end{eqnarray*}
Therefore we achieve that
\begin{eqnarray*}
K({x^{}_{K,\,n}}(\omega))&=&
K\left(
{x^{}_{K,\,n}}(\omega),\,
k\left(\frac{C_K^{6n+1}}{2^{42}3^{30}}\right)^{m_N}
\right)\\
&=&H_K\left(
{x^{}_{K,\,n}}(\omega)
\right)\quad\textrm{by Lemma \ref{HKc}}\\
&=&K_{(N)}\quad\textrm{by (\ref{xchp}) and Proposition \ref{CM}}.
\end{eqnarray*}
\end{proof}

\section*{Acknowledgment}
Ho Yun Jung was supported by the research fund of Dankook University in 2021 and by the National Research Foundation of Korea(NRF) grant funded by the Korea government(MSIT)(No. 2020R1F1A1A01073055). Dong Hwa Shin was supported by Hankuk University of Foreign Studies Research Fund of 2021 and by the National Research Foundation of Korea(NRF) grant funded by the Korea government(MSIT)(No. 2020R1F1A1A01048633).\\

\section*{Funding}
Not applicable.

\section*{Availability of data and materials}
Not applicable.

\section*{Competing interests}
The authors declare that they have no competing interests.

\section*{Author's contributions}
All the authors contributes equally in this paper. All authors read and approved the final manuscript.

\bibliographystyle{amsplain}

\end{document}